\renewcommand{\maketitle}{\bgroup\setlength{\parindent}{0pt}
\begin{flushleft}
  \textbf{\@title}

  \@author
\end{flushleft}\egroup
} 
\def\footnoterule{\kern-3\p@
  \hrule \@width 2in \kern 2.6\p@} 
\title{{\Large \textbf{On f-generic types in NIP groups}}}
\author{Atticus Stonestrom, University of Notre Dame}
\date{}
\begin{document}
\maketitle


{\small\noindent\textbf{Abstract:} Recall that a definable group is `definably amenable' if it admits a translation-invariant Keisler measure. We prove a combinatorial characterization of definable amenability for groups definable in NIP theories. More specifically, given a group $G$, a subset $D\subseteq G$ is said to (left) `$G$-divide' if there is some natural number $k$ and an infinite sequence of elements $g_i\in G$ such that $g_{i_1}D\cap\dots\cap g_{i_k}D=\varnothing$ for all $i_1<\dots<i_k$. Our main result is that, if $G$ is a group definable in an NIP theory, and the union of two definable $G$-dividing subsets of $G$ still $G$-divides, then $G$ is definably amenable. It follows that $G$ is definably amenable if and only if $G$ admits a global `f-generic' type. This answers a question of Chernikov and Simon and substantially generalizes a theorem of Hrushovski and Pillay. As a quick application of the main result, we show that every dp-minimal group is definably amenable, which answers a question of Chernikov, Pillay, and Simon. Finally, we show that the appropriate analogue of the main result holds also for type-definable groups, so that, in an NIP theory, a type-definable group with a global f-generic type is definable amenable; this additionally gives the first correct proof of the analogous result, claimed by Hrushovski and Pillay, for type-definable groups with a global \textit{strongly} f-generic type. \newline

\noindent\textbf{Acknowledgements:} I would especially like to thank Artem Chernikov for all of his encouragement during this project, and my PhD advisor Anand Pillay for his extensive support and guidance. I would additionally like to thank Itay Kaplan, Krzysztof Krupi\'{n}ski, and Pierre Simon for taking the time to read earlier versions of this paper. I would like to thank Itay Kaplan in particular for pointing out a shorter proof of Lemma 3.1. \newline

\noindent\textbf{Notation:} Throughout, $T$ will denote a complete theory in a first-order language $L$, with a monster model $\mathfrak{C}$. Everywhere except Section 5, $G$ will denote a $\varnothing$-definable group in $T$ (in Section 5, it will be a type-definable group), and for any model $M\preccurlyeq\mathfrak{C}$ we write $G(M)$ for the $M$-points of $G$; as usual we will often notationally identify $G$ with $G(\mathfrak{C})$ when it does not cause confusion. For a parameter set $C\subseteq\mathfrak{C}$, we use $S_G(C)$ to denote the space of all complete types over $C$ that are concentrated on $G$. For tuples $a,b$, and a small parameter set $C\subset\mathfrak{C}$, we write $a\equiv_C b$ to mean $\tp(a/C)=\tp(b/C)$. We write $\ind$ for non-forking independence. We write $[n]$ to mean $\{1,\dots,n\}$, so that in particular $[0]=\varnothing$. Finally, everywhere except Section 4, we will assume that $T$ is NIP.

} 

\section{Introduction}
The study of groups definable in first-order theories has long been of interest in model theory, and a particularly successful and important example has been the study of `stable groups', i.e. groups definable in stable theories. (Recall that a theory is `stable' if there is no definable relation which induces a linear order on an infinite set.) Examples of stable groups include any abelian group in the pure group language, any algebraic group in an algebraically closed field, and any differential algebraic group in a differentially closed field. In general, there is a robust machinery available for analyzing the structure of definable groups in arbitrary stable theories (see e.g. \citep{wagner}), and this structure theory and appropriate analogues of it have played a key role in many of the major applications of model theory to algebraic geometry, differential algebraic geometry, and additive combinatorics.

In the attempts to generalize the success of stability theory to more general settings, it is thus natural, in particular, to attempt to generalize the structure theory of stable groups to more general families of definable groups. This paper deals specifically with `NIP groups,' i.e. groups definable in NIP theories. (Recall that a theory is `NIP' if every uniformly definable family of sets has finite VC-dimension – equivalently, if there is no definable graph that contains an isomorphic copy of every finite graph as an induced subgraph.) Stable structures are all NIP, but the class of NIP structures is much broader, and includes for instance the real field (or more generally any o-minimal structure), the $p$-adic fields, the ordered additive group of integers, and algebraically closed valued fields. Correspondingly, the class of NIP groups is also much broader than that of stable groups, and includes any real or $p$-adic semialgebraic group, as well as other Lie groups such as the universal cover of $\mathrm{SL}_2(\mathbb{R})$. \cite{hpp_2}

A productive and important way of viewing NIP groups, originally proposed by Newelski in \cite{newelski}, is via topological dynamics. In the most naive version, given a structure $M$ and a group $G$ definable in $M$, one considers the continuous action of the discrete group $G(M)$ on the compact type space $S_G(M)$, where elements of $G(M)$ act by left-translation on types in $S_G(M)$. This yields a $G(M)$-flow; if the structure $M$ is NIP, then the $G(M)$-flow is `tame' in the sense of \cite{glasner}: namely, for any continuous real-valued map $f\in C(S_G(M))$, and for any sequence $g_i\in G(M),i\in\omega$, the sequence $(f\circ g_i:i\in\omega)$ in the Banach space $(C(S_G(M)),||\cdot||_\mathrm{sup})$ is not an `$\ell^1$-sequence' in the sense of \cite{rosenthal}; see e.g. Section 4 of \cite{krupinski_rzepecki}. So, one can think of the NIP condition on the definable group $G$ as a model-theoretic condition that guarantees the tameness of its associated flow, and our main result can be understood as a statement about this tame flow, in a way we will make precise below.

In the attempt to study NIP groups, one approach has been to restrict the class of NIP theories one is interested in and study the definable groups there; an extremely successful example has been the classification of definable groups in o-minimal theories, like that of the real field. Another example of this nature is the study of definable groups specifically in the $p$-adic fields, which is a very active area with some significant open problems. However, just as one has for arbitrary stable theories, one also wants a structure theory for definable groups in \textit{arbitrary} NIP theories.

Thus far, it has only been possible to do this after placing additional restrictions on the definable groups one considers, and the most general assumption which has yielded a successful structure theory is that of `definable amenability': a definable group is said to be definably amenable if it admits a translation-invariant finitely additive probability measure on the Boolean algebra of its definable subsets. In the dynamical language above, the definable amenability condition says precisely that there is a $G(M)$-invariant regular Borel probability measure on $S_G(M)$. Definable amenability was first introduced without a name in \cite{newelski_petrykowski}, where it was proven that stable groups are definably amenable, and the notion was then named and studied in its own right in \citep{hrushovski_peterzil_pillay}. Along with stable groups, examples of definably amenable groups include definable groups whose realizations in some model are amenable as a discrete group (so, for instance, expansions of solvable groups), pseudofinite groups, and groups definable in $F=(\mathbb{R},\cdot,+)$ and $F=(\mathbb{Q}_p,\cdot,+)$ whose $F$-points are compact in the inherited topology; non-examples include $\mathrm{SL}_2$ as a definable group in the fields $F$ given above.

The structure theory of definably amenable NIP groups has been very successfully developed, with major progress made over the course of the three papers \citep{hrushovski_peterzil_pillay}, \citep{hrushovski_pillay}, and \citep{chernikov_simon}. The original motivation for this work arose in the investigation of `Pillay's conjecture' for groups definable in o-minimal structures, but the notion was quickly seen to be of importance more generally and investigated in the general NIP setting. A key part of this investigation was to find an appropriate analogue for the theory of generic sets and types in stable groups, which we recall quickly now: 

A subset of a group is called (left) `generic' if finitely many left translates of it cover the group. Genericity is a well-behaved notion in stable groups: the non-generic definable subsets form an ideal, allowing for the construction of global `generic types' (global types that do not concentrate on any non-generic definable subset), which play an essential role in stable group theory.

Generic types need not exist in unstable definable groups; for example, the underlying group of $(\mathbb{R},+,<)$ does not admit generic types, since neither $(-\infty,0]$ nor $[0,\infty)$ is a generic subset. However, for definably amenable NIP groups, the `correct' generalization of genericity is given in \citep{chernikov_simon}: in a group $G$, a subset $D\subseteq G$ is said to (left) `$G$-divide' if and only if there is some $k\in\omega$ and some sequence $g_i\in G,i\in\omega$ such that $\bigcap_{i\in s}g_iD=\varnothing$ for every $s\subset\omega$ of size $k$. The relevant definable subsets of $G$ in the definably amenable NIP setting are then those sets that do not $G$-divide, a notion that is extensively investigated and employed in \citep{chernikov_simon}.

A definable subset of a group $G$ definable in an NIP theory does not $G$-divide if and only if it is `f-generic' (see Section 2.3 for the definition). Moreover, by a Ramsey's theorem argument, a definable subset of $G$ that $G$-divides must have measure $0$ under any translation-invariant Keisler measure on $G$. In particular, if $G$ is definably amenable and NIP, then the non-f-generic definable subsets of $G$ form a translation-invariant ideal, and in \citep{chernikov_simon} the corresponding machinery of f-generic types was then used to answer a swathe of open problems on definably amenable NIP groups. Moreover the ideal of non-f-generic sets was shown there to be canonical in the following sense: in a definably amenable NIP group, the non-f-generic definable sets are precisely the definable sets which get measure $0$ under any translation-invariant Keisler measure.

However, an important gap in the theory was left open, and posed as Question 3.18 in \citep{chernikov_simon}: whether, in an NIP group, the existence of the ideal of non-f-generic definable sets – or equivalently (see Section 2.3) the existence of a global f-generic type – implies (and is hence equivalent to) definable amenability. In this paper we give a positive answer:


\begin{theorem}
    Suppose $G$ is a group definable in an NIP theory. If $G$ admits a global f-generic type, then $G$ is definably amenable.
\end{theorem} This is given by Theorem 3.8. For a concrete example of the theorem, let us show why $\mathrm{SL}_2$ is not a definably amenable group in the structure $(\mathbb{R},+,\cdot)$. Consider the definable subset $D=\left\{\begin{bmatrix} a & b \\ c & d \end{bmatrix}:|a|\geqslant|c|\right\}$ of $G=\mathrm{SL}_2(\mathbb{R})$. On the one hand, letting $g_n=\begin{bmatrix} 1 & 0 \\ 3n & 1 \end{bmatrix}$  for each $n\in\omega$, then we have that $g_iD\cap g_jD=\varnothing$ for each $i\neq j$, so that $D$ $G$-divides. On the other hand, by a similar argument using the transpose $g_n^{\mathrm{tr}}$, the complement of $D$ also $G$-divides. So $\mathrm{SL}_2(\mathbb{R})$ is a union of two definable sets which must have measure $0$ under any left-translation-invariant probability measure, and hence there can be no such measure. Our main result is that every failure of definable amenability for an NIP group arises from a similar combinatorial situation.

The result is a significant strengthening and generalization of an earlier result from \citep{hrushovski_pillay}, which showed that, if $G$ is a group definable in a countable\footnote[1]{In \citep{hrushovski_pillay} this result is claimed for arbitrary NIP theories, not just countable ones. But a gap in the proof for the uncountable case was found by Krupi\'{n}ski; see Section 2.3.2 for more discussion on this.} NIP theory, and $G$ admits a \textit{strongly} f-generic type (see Section 2.3.2 for the definition), then $G$ is definably amenable. The notion of strong f-genericity was the first attempt to find the right generalization of genericity to the definably amenable NIP setting; in \citep{hrushovski_pillay}, following nomenclature from \citep{newelski_petrykowski}, strong f-genericity is just called f-genericity, but the notion was renamed in \citep{chernikov_simon} after realizing that f-genericity as defined in this paper is the more suitable and natural generalization. In particular, f-genericity is substantially more general than strong f-genericity, and even in some basic examples of definably amenable NIP groups (such as the group $(\mathbb{R}^2,+)$ in the structure $(\mathbb{R},+,\cdot)$) there are unboundedly many global f-generic types that are not strongly f-generic; see Example 3.10 in \cite{chernikov_simon}. Strong f-genericity does not have the same kind of combinatorial definition (purely in terms of the group operation) that genericity or f-genericity (in the form of $G$-dividing) have, which also prevents it from having something like the natural dynamical interpretation of $G$-dividing that we point out below. After their extensive role in \cite{chernikov_simon} in the resolution of a number of open problems, f-generic types took on a central importance in the theory of definably amenable NIP groups; as such, the inability to show that their existence is enough to imply definable amenability was a major gap in the structure theory, which this paper now fills.

Altogether, in conjunction with results from \citep{hrushovski_pillay} and \citep{chernikov_simon}, our main result gives the following:

\begin{theorem}
    The following are equivalent for a group $G$ definable in an NIP theory:
\begin{enumerate}
        \item $G$ is definably amenable.
        \item $G$ admits a global type invariant under left-translation by the elements of $G^{00}$.
        \item $G$ admits a global f-generic type.
        \item The non-f-generic, equivalently $G$-dividing, definable subsets of $G$ form an ideal.
\end{enumerate}\end{theorem} (This is Corollary 3.9.)

In the dynamical language discussed above, our main result can be viewed as a statement about the flow $(G(\mathfrak{C}),S_G(\mathfrak{C}))$ as follows. Assume that $\mathfrak{C}$ is NIP, so that $(G(\mathfrak{C}),S_G(\mathfrak{C}))$ is tame, and say that a $\{0,1\}$-valued function $f\in C(S_G(\mathfrak{C}))$ `$G$-divides' if and only if there is $k\in\omega$ and a sequence $(g_i:i\in\omega)$ from $G(\mathfrak{C})$ such that, for any $s\subset\omega$ of size $k$, we have $\prod_{i\in s}(f\circ g_i)=0$. Then our result says that there is a $G(\mathfrak{C})$-invariant regular Borel probability measure on $S_G(\mathfrak{C})$ if and only if, for any continuous $\{0,1\}$-valued functions $f_1,f_2\in C(S_G(\mathfrak{C}))$, if $f_1,f_2$ both $G$-divide then so does $\max\{f_1,f_2\}$. It would be tempting to find an appropriate analogue in more general tame flows – namely, given a tame flow $(H,X)$, to give an appropriate definition of `$H$-dividing' for an arbitrary continuous function $f\in C(X)$, not just a $\{0,1\}$-valued one, and to investigate the existence of $H$-invariant regular Borel probability measures on $X$ through that lens. That falls outside the scope of this paper, but is something the author hopes to pursue in the future.

As a model-theoretic application of our main theorem, we prove a result on dp-minimal groups. `Dp-minimality' is a kind of abstract one-dimensionality condition for NIP theories that generalizes o-minimality and strong minimality, which was first studied in \citep{shelah_forking} and \citep{shelah_strong} and then isolated as a notion in \citep{onshuus_usvyatsov}. It was asked in \citep{chernikov_pillay_simon} and \citep{kaplan_levi_simon} whether every dp-minimal group is definably amenable. We give a positive answer here, obtained as a quick corollary of the results from Section 3:
\begin{theorem}
    If $G$ is a dp-minimal definable group, then $G$ is definably amenable.
\end{theorem} This is given by Theorem 4.3. In later work, this result is used as a starting point to show that dp-minimal groups are nilpotent-by-finite; see \cite{stonestrom} and \cite{wagner_dp_minimal}.

Finally, in Section 5, we treat the case of `type-definable' groups, namely groups that are given as the intersection of a (possibly infinite) family of definable sets. In the stable setting, it is a theorem of Hrushovski that every type-definable group is an intersection of definable groups, but this need not be true in the NIP setting – for example, the group of `infinitesimals' of $(\mathbb{R},+,\cdot)$, i.e. the elements of a saturated elementary extension that satisfy $-1/n<x<1/n$ for every $n\in\mathbb{N}$, will be a type-definable subgroup of the additive group that is not an intersection of definable groups. Nevertheless, model-theoretic theorems about definable groups often hold in the more general type-definable setting, albeit at times with a bit more care required in the proof. We show in Section 5 that this is the case for our main result: \begin{theorem}
    Let $G$ be a type-definable group in an NIP theory. If there is a global type concentrated on $G$ that is f-generic for $G$, then $G$ is definably amenable.
\end{theorem} This is Theorem 5.4. This result also gives the first correct proof, to our knowledge, of Proposition 5.7 in \cite{hrushovski_pillay}, which claims that a \textit{type-definable} group in an NIP theory admitting a global \textit{strongly} f-generic type is definably amenable; see Section 2.3.2 and Section 5 for more discussion of this.

\section{Preliminaries}
As in the rest of the paper, we assume throughout this section that $T$ is NIP.

\subsection{Forking in NIP Theories}
We recall here some properties of non-forking independence in NIP theories. In this paper we will always be working over models, so we state all the necessary facts only for that context. Recall that a formula $\phi(x,b)$ with parameters from $\mathfrak{C}$ `divides' over a parameter set $A\subset\mathfrak{C}$ if there is some $k\in\omega$ and some sequence $(b_i:i\in\omega)$ such that $b_i\equiv_A b$ for all $i\in\omega$ and such that the family of formulas $(\phi(x,b_i):i\in\omega)$ is $k$-inconsistent. A partial type divides over $A$ if it implies a formula that divides over $A$, and it `forks' over $A$ if it implies a finite disjunction of formulas that each divide over $A$. We write $a\ind_A b$ to mean that $\tp(a/A,b)$ does not fork over $A$.

Recall that a global type $p(x)\in S(\mathfrak{C})$ is said to be `$M$-invariant' over some small model $M\prec\mathfrak{C}$ if, for any tuples $b,b'$ from $\mathfrak{C}$ with $b\equiv_M b'$, we have $p(x)\vdash\phi(x,b)$ if and only if $p(x)\vdash\phi(x,b')$. From \citep{shelah_forking} we first have the following:
\begin{fact}
    If $p(x)\in S(\mathfrak{C})$ is a global type and $M\prec\mathfrak{C}$ is a small model, then $p(x)$ does not fork over $M$ if and only if it is $M$-invariant.
\end{fact}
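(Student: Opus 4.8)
I would treat the two implications separately; only the converse requires the NIP hypothesis.

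For the implication ``$M$-invariant $\Rightarrow$ does not fork over $M$'' (which is valid in any theory), it suffices to show that no formula $\chi(x,d)$ dividing over $M$ lies in $p$: if some $\psi(x,b)\in p$ forked over $M$, say $\psi(x,b)\vdash\bigvee_{j<n}\chi_j(x,d_j)$ with each $\chi_j(x,d_j)$ dividing over $M$, then completeness of $p$ would put some $\chi_j(x,d_j)$ into $p$. So let $\chi(x,d)$ divide over $M$, witnessed by an $M$-indiscernible sequence $(d_i)_{i<\omega}$ with $d_0=d$ and $\{\chi(x,d_i):i<\omega\}$ being $k$-inconsistent. Each $d_i$ satisfies $d_i\equiv_M d$, so by $M$-invariance the formulas $\chi(x,d_i)$ are either all in $p$ or all out of $p$; the first alternative contradicts consistency of $p$, so $\chi(x,d)\notin p$.

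For the converse, suppose $p$ does not fork over $M$ but is not $M$-invariant; then there are a formula $\phi(x,y)$ and parameters $b\equiv_M b'$ with $\phi(x,b)\wedge\neg\phi(x,b')\in p$. Since $M$ is a model, $\tp(b/M)=\tp(b'/M)$ is a Lascar strong type, so $b$ and $b'$ are joined by a finite chain $b=c_0,c_1,\dots,c_m=b'$ whose consecutive pairs lie on $M$-indiscernible sequences; concretely, letting $e$ realize over $Mbb'$ a global coheir of $\tp(b/M)$, one may take $m=2$ with $c_1=e$, since each of $(b,e)$ and $(b',e)$ begins a Morley sequence of that coheir over $M$. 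As $\phi(x,c_0)\in p$ while $\phi(x,c_m)\notin p$, some consecutive pair of the chain, reversed if need be, occurs — after passing to a two-element segment and re-extending — as the first two terms of an $M$-indiscernible sequence $(a_i)_{i<\omega}$ with $\phi(x,a_0)\wedge\neg\phi(x,a_1)\in p$.

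The crux, and the only place NIP is used, is that $\phi(x,a_0)\wedge\neg\phi(x,a_1)$ divides over $M$. The sequence of pairs $(a_{2i},a_{2i+1})_{i<\omega}$ is $M$-indiscernible with first term $(a_0,a_1)$, and $\{\phi(x,a_{2i})\wedge\neg\phi(x,a_{2i+1}):i<\omega\}$ is $k$-inconsistent whenever $k$ exceeds the alternation number of $\phi$, since any common realization of $k$ of these conjunctions would make $\phi$ alternate along the indiscernible sequence $(a_i)$ more often than its alternation number permits. Hence $\phi(x,a_0)\wedge\neg\phi(x,a_1)$ forks over $M$, contradicting that it belongs to $p$ and that $p$ does not fork over $M$. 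I expect the only mildly delicate point to be the passage from $b\equiv_M b'$ to an $M$-indiscernible sequence that $p$ distinguishes — essentially that realizations of a type over a model lie at bounded Lascar distance — whereas the substantive idea is the alternate-pairs construction witnessing division in the final step.
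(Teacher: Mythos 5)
Your proof is correct, and it is essentially the standard argument behind Fact 2.1, which the paper itself does not prove but quotes from Shelah and Adler: invariance implies non-forking in any theory via completeness of $p$; conversely, non-forking plus NIP forces Lascar-invariance (your alternating-pairs dividing argument), and over a model Lascar strong types coincide with types, exactly by your coheir chain $b,e$ and $b',e$. The only step you flag as delicate is indeed fine as you set it up: both ordered pairs $(b,e)$ and $(b',e)$ begin Morley sequences of the coheir over $M$, so a case split on whether $\phi(x,e)\in p$ (replacing $\phi$ by $\neg\phi$ if needed) yields the required $M$-indiscernible sequence whose first two terms $p$ distinguishes, and grouping it into consecutive pairs gives the dividing witness.
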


For partial types or complete types that are not global, the relationship of forking and dividing in NIP theories is greatly clarified by the results from \citep{chernikov_kaplan} on the more general class of NTP$_2$ theories. We will rely on a number of those results, which we record in the facts below.

\begin{fact}
    Let $M\prec\mathfrak{C}$ be a small model. Then an $L(\mathfrak{C})$-formula forks over $M$ if and only if it divides over $M$. 
\end{fact}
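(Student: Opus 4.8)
One direction is immediate: if an $L_\mathfrak{C}$-formula $\phi(x,a)$ divides over $M$, then it forks over $M$, witnessed by the one-element disjunction $\phi(x,a)\vdash\phi(x,a)$. The content is the converse, and the plan is to reproduce Chernikov and Kaplan's argument that forking coincides with dividing over any extension base in an NTP$_2$ theory, using that a small model is in particular an extension base. So suppose $\phi(x,a)\vdash\bigvee_{i<n}\psi_i(x,b_i)$ with each $\psi_i(x,b_i)$ dividing over $M$; the goal is to conclude that $\phi(x,a)$ itself divides over $M$.

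First I would set up a suitable indiscernible sequence. Since $M$ is a model, $\tp(a/M)$ has a global extension $p(y)$ finitely satisfiable in $M$, hence $M$-invariant and non-forking over $M$; let $(a_i)_{i<\omega}$ be a Morley sequence of $p$ over $M$. This sequence is $M$-indiscernible and $a_0\equiv_M a$, so after an automorphism we may assume $a_0=a$. Transporting the disjunction along the sequence via $M$-invariance, for each $i$ one gets parameters $b_i^0,\dots,b_i^{n-1}$ with $(a_i,b_i^0,\dots,b_i^{n-1})\equiv_M(a,b_0,\dots,b_{n-1})$, so that $\phi(x,a_i)\vdash\bigvee_{j<n}\psi_j(x,b_i^j)$ and each $\psi_j(x,b_i^j)$ still divides over $M$ --- whether a formula divides over $M$ depends only on the type of its parameter over $M$. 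It then suffices to show that $\{\phi(x,a_i):i<\omega\}$ is inconsistent, since by $M$-indiscernibility of $(a_i)$ this exhibits $\phi(x,a)$ dividing over $M$.

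Proving that inconsistency is the heart of the matter, and the step I expect to be the main obstacle. A naive argument fails: a realization $c$ of $\{\phi(x,a_i):i<\omega\}$ would, by pigeonhole on the disjuncts, satisfy $\psi_j(x,b_i^j)$ for a single fixed $j<n$ and infinitely many $i$, but the parameters $(b_i^j)_i$ along such a subsequence need not be indiscernible, let alone witness that $\psi_j$ divides over $M$, so no contradiction is immediate. This is precisely the gap that NTP$_2$ closes, through Chernikov and Kaplan's \emph{broom lemma}: arguing by induction on the number $n$ of dividing disjuncts, one builds a two-dimensional indiscernible array whose rows witness that $\psi_0(x,b_0)$ divides and whose leading column is $(a_i)$, invokes the NTP$_2$ array (tree) condition to forbid the relevant paths through the array, and thereby reduces to a disjunction with strictly fewer dividing disjuncts; running $(a_i)$ through this machinery produces a finite $N$ with $\bigwedge_{i<N}\phi(x,a_i)$ inconsistent. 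The delicate point is organizing the array and applying the tree condition at the right place; the remaining ingredients --- producing the global coheir and its Morley sequence over $M$, where the hypothesis that $M$ is a model is essential, then transporting the disjunction and reading dividing off the final inconsistency --- are routine.
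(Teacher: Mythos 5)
Note first that the paper does not prove this statement: it is Fact 2.2, quoted as a black box from Chernikov--Kaplan's work on NTP$_2$ theories, so the only question is whether your sketch of their argument would actually assemble into a proof. Your easy direction is fine, and your framing (the content is the NTP$_2$ theorem that forking equals dividing over extension bases, models being extension bases) is correct. But the reduction you commit to in the hard direction contains a genuine error: you take a Morley sequence $(a_i)_{i<\omega}$ of an arbitrary global \emph{coheir} of $\tp(a/M)$ and aim to show that $\{\phi(x,a_i):i<\omega\}$ is inconsistent. That intermediate statement is not a theorem, and no broom-lemma machinery will deliver it, because Morley sequences of invariant (in particular finitely satisfiable) types need not witness dividing in NIP theories; only \emph{strict} Morley sequences do. Concretely, in the dense circular order let $b_1,b_2$ lie in the same cut over a small model $M$ and let $\phi(x;b_1,b_2)$ say that $x$ lies on the $M$-free arc from $b_1$ to $b_2$: this formula divides (hence forks) over $M$, but the global type obtained as the product of the two coheirs approaching the cut from either side (so $y_1$ just outside $b_1$, $y_2$ just outside $b_2$) is finitely satisfiable in $M$, extends $\tp(b_1,b_2/M)$, and its Morley sequence consists of nested increasing arcs, so the corresponding instances of $\phi$ are simultaneously consistent. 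Thus for a bad choice of coheir your target is false, and your argument provides no way to choose a good one; the failure of strictness here is visible in that the earlier pairs fork with the later ones over $M$.

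What Chernikov and Kaplan actually use, and where the hypothesis that $M$ is a model genuinely enters, is not the existence of coheirs but the existence of \emph{strictly} non-forking global extensions over extension bases --- exactly Fact 2.4 of this paper. Dividing of the disjuncts must be witnessed along strictly invariant sequences, and the broom lemma (equivalently, in its packaged form, Kim's lemma for NTP$_2$) then shows that a strict Morley sequence in the type of the parameters over $M$ witnesses dividing of $\phi(x,a)$ itself. Replacing your coheir by a strict Morley sequence of a strictly non-forking extension of $\tp(a,b_0,\dots,b_{n-1}/M)$ is the missing idea; with that change your outline does track the cited proof, although the broom lemma, which you leave as a description rather than an argument, is the entire technical content --- which is presumably why the paper records the statement as a Fact with a citation rather than reproving it.
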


\begin{fact}
    Let $M\prec\mathfrak{C}$ be a small model. Then $\ind$ has `left-extension' over $M$: if $A,C$ are sets with $A\ind_M C$, and $B\supseteq A$, then there is $B'$ with $B'\equiv_{M,A} B$ and $B'\ind_M C$.
\end{fact}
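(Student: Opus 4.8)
The plan is to deduce this from the ordinary \emph{extension axiom} for non-forking by exchanging the two sides of the independence relation, using that non-forking is symmetric over models in any NIP theory. Recall the extension axiom, valid in every theory: if $\tp(D/N)$ does not fork over a subset $M\subseteq N$, then for every $N'\supseteq N$ there is $D'\equiv_N D$ with $\tp(D'/N')$ not forking over $M$. Indeed, by compactness the partial type $\tp(D/N)\cup\{\neg\varphi:\varphi\text{ an }L_{N'}\text{-formula forking over }M\}$ is consistent, since a finite disjunction of formulas forking over $M$ again forks over $M$ while $\tp(D/N)$ contains no such formula; any completion to $S(N')$ is then realized by a suitable $D'$.

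Given $A\ind_M C$ and $B\supseteq A$, I would first invoke symmetry over the model $M$ to get $C\ind_M A$, i.e.\ $\tp(C/MA)$ does not fork over $M$. Applying the extension axiom with $N=MA$ and $N'=MB$ produces $C^{*}\equiv_{MA}C$ with $C^{*}\ind_M B$. Pick $\sigma\in\mathrm{Aut}(\mathfrak{C}/MA)$ with $\sigma(C^{*})=C$ and set $B':=\sigma(B)$. Since $\sigma$ fixes $MA$ pointwise, $B'\equiv_{MA}B$; since $\sigma$ fixes $M$ and is an automorphism, $C^{*}\ind_M B$ gives $C=\sigma(C^{*})\ind_M\sigma(B)=B'$; and a final application of symmetry gives $B'\ind_M C$, as required.

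The only non-routine ingredient is the symmetry of non-forking over models; everything else is bookkeeping. If one wishes to rely solely on the facts already recorded, one can instead argue directly by compactness: by Fact 2 (to pass freely between forking and dividing over $M$) together with the observation that a disjunction of formulas dividing over $M$ forks, hence divides, over $M$, it suffices to show that the partial type asserting ``$\bar x\equiv_{MA}B$'' together with the negations of all $L_{MC}$-formulas dividing over $M$ is consistent; splitting the variable $\bar x$ into the coordinates pinned down to $A$ and the remainder, this reduces to the fact that no formula of $\tp(A/MC)$ divides over $M$. The one point needing care in that route is that the $L_M$-formula one exhibits in $\tp(A/MC)$ must carry an existential conjunct witnessing consistency of the ``$B$-part'', and this conjunct is precisely what transports the relevant $k$-inconsistency back from the assumed dividing formula. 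I expect the appeal to symmetry (respectively, in the alternative, this last conjunct) to be the only delicate step.
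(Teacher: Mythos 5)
Your first (main) argument rests on the claim that non-forking is symmetric over models in NIP theories, and this is false: symmetry of forking independence characterizes \emph{simple} theories, and NIP theories need not be simple. Concretely, in DLO take a small model $M$, a cut of $M$, and realizations $b_1<a<b_2$ of that cut. Then $b_1b_2\ind_M a$, since $\tp(b_1b_2/Ma)$ extends to the global $M$-invariant type saying ``$y_1$ lies immediately above the lower half of the cut and $y_2$ immediately below the upper half''; but $a\notind_M b_1b_2$, since $b_1<x<b_2$ divides over $M$ (take an $M$-indiscernible sequence of pairs in the cut giving pairwise disjoint intervals). So both appeals to symmetry in your argument are illegitimate, and indeed this should be a warning sign: if symmetry were available, left-extension would be an immediate formal consequence of ordinary (right-hand) extension, whereas the whole content of this result of Chernikov--Kaplan is to obtain left-extension \emph{without} symmetry. (Note the paper does not prove this Fact at all; it quotes it from Chernikov--Kaplan, so there is no in-paper proof to compare against.)

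Your fallback compactness sketch, by contrast, is essentially the right argument (and essentially the Chernikov--Kaplan one), but it is incomplete at precisely the point you flag as delicate, and the missing piece is not only the existential conjunct. Suppose the partial type ``$\bar x\equiv_{MA}B$ plus the negations of all $L_{MC}$-formulas dividing over $M$'' were inconsistent. Compactness gives $\psi(\bar x;m,a)\in\tp(B/MA)$ (with $m\in M$, $a$ a finite tuple from $A$, all $A$-occurrences displayed) and formulas $\varphi_1(\bar x;c_1),\dots,\varphi_n(\bar x;c_n)$ dividing over $M$ with $\psi(\bar x;m,a)\vdash\bigvee_i\varphi_i(\bar x;c_i)$. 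The formula you must then exhibit in $\tp(A/MC)$ is $\theta(\bar z;m,\bar c):=\exists\bar x\,\psi(\bar x;m,\bar z)\wedge\forall\bar x\bigl(\psi(\bar x;m,\bar z)\to\bigvee_i\varphi_i(\bar x;c_i)\bigr)$: the universal implication is needed in addition to the existential conjunct, since a formula of the shape $\exists\bar y\bigl(\psi\wedge\bigvee_i\varphi_i\bigr)$ need not divide (different members of the witnessing indiscernible sequence could use different witnesses $\bar y$). With $\theta$ the argument does close: by Fact 2.2 the disjunction $\bigvee_i\varphi_i(\bar x;\bar c)$ forks, hence divides, over $M$, witnessed by an $M$-indiscernible sequence $(\bar c^j)_{j<\omega}$ with $k$-inconsistent instances; if some $a^*$ realized $\theta(\bar z;m,\bar c^j)$ for $k$ values of $j$, a single witness of $\exists\bar x\,\psi(\bar x;m,a^*)$ would realize $k$ of the disjunctions simultaneously, a contradiction. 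Hence $\theta$ divides over $M$, contradicting $A\ind_M C$, since $\models\theta(a;m,\bar c)$ (the existential conjunct being witnessed by $B$ itself). So: discard the symmetry route, and write out the second argument with this formula; the only NIP/NTP$_2$ input is then Fact 2.2, which is exactly what converts the forking disjunction into a dividing one.
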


\begin{fact}
    Let $p(x)\in S(M)$ be a complete type over a small model $M$. Then there is $q(x)\in S(\mathfrak{C})$ a global extension of $p(x)$ strictly non-forking over $M$. (So, for any small parameter set $C\supseteq M$, if $a\models q|_C$ then $a\ind_M C$ and $C\ind_M a$.)
\end{fact}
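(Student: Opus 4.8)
The plan is to separate the two requirements packed into ``$q$ strictly non-forking over $M$'': by definition this means that for every small $C\supseteq M$ and every $a\models q|_C$ one has both $a\ind_M C$ and $C\ind_M a$. The first of these just says $q$ is non-forking over $M$, equivalently (Fact 1) $M$-invariant, and such extensions of $p$ always exist by standard forking calculus over models (the $L_\mathfrak{C}$-formulas forking over $M$ are closed under disjunction and form a proper ideal, so avoiding all of them is consistent with $p$); this is the soft part. The real content is to secure the second condition, $C\ind_M a$, at the same time.

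I would reduce this to the finitary statement $(\star)$: \emph{for every finite tuple $\bar c$ from $\mathfrak{C}$ there is $a\models p$ with $a\ind_M\bar c$ and $\bar c\ind_M a$}. Granting $(\star)$, let $\Sigma(x)$ be the partial type over $\mathfrak{C}$ consisting of $p(x)$; the formula $\neg\theta(x,c)$ for every $L_\mathfrak{C}$-formula $\theta(x,c)$ forking over $M$; and the formula $\neg\psi(\bar c,x)$ for every finite tuple $\bar c$ from $\mathfrak{C}$ and every $L_M$-formula $\psi(\bar y,\bar z)$ such that $\psi(\bar y,a)$ forks over $M$ for $a\models p$ (the last clause is unambiguous because forking is automorphism-invariant and $\tp(a/M)=p$ is fixed). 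Any finite fragment of $\Sigma$ mentions only finitely many parameter tuples, which we merge into one finite $\bar c\subseteq\mathfrak{C}$, and the witness $a\models p$ supplied by $(\star)$ for that $\bar c$ satisfies the fragment: $a\ind_M\bar c$ says $\tp(a/M\bar c)$ contains no formula forking over $M$, handling the relevant $\neg\theta$'s, and $\bar c\ind_M a$ says $\tp(\bar c/Ma)$ contains no formula forking over $M$, handling the relevant $\neg\psi$'s. Hence $\Sigma$ is consistent; realizing it by some $a$ in an elementary extension $\mathfrak{C}'$ of $\mathfrak{C}$, the type $q:=\tp(a/\mathfrak{C})$ extends $p$, omits every formula forking over $M$ (so it is $M$-invariant by Fact 1), and satisfies $\bar c\ind_M a$ for every finite $\bar c\subseteq\mathfrak{C}$, hence $C\ind_M a$ for every small $C$. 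Applying automorphisms fixing an arbitrary small $C\supseteq M$ then shows $q$ is strictly non-forking over $M$.

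The crux is $(\star)$, and I expect essentially all of the difficulty to sit there; it is also the point where NIP---in fact NTP$_2$---is genuinely used, since $\ind$ is not symmetric over models in general, so $(\star)$ cannot be obtained by formal manipulation of Facts 1--3. I would prove it along the lines of \citep{chernikov_kaplan}, whose ``broom lemma'' is exactly the combinatorial tool that upgrades independence on one side to independence on both. Concretely: fix $a_0\models p$ and $\bar c_0\equiv_M\bar c$ with $\bar c_0\ind_M a_0$ (a non-forking extension of $\tp(\bar c/M)$ over $Ma_0$, available since $\tp(\bar c/M)$ does not fork over $M$), so that the independence holds on one side; then spread out a long $M$-indiscernible sequence and apply the broom lemma to obtain a configuration $(\bar c',a')$ with $\bar c'\equiv_M\bar c$, $a'\models p$, and $a'\ind_M\bar c'$ on top of $\bar c'\ind_M a'$---for otherwise one would witness that some $L_M$-formula of $p$ divides over $M$, impossible for $p\in S(M)$. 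An automorphism sending $\bar c'$ back to $\bar c$ then yields $(\star)$. (This is closely related to the existence of a strict Morley sequence of $p$ over $M$, the form in which \citep{chernikov_kaplan} packages the argument.) Everything else above is compactness, finite character of forking, and Fact 1.
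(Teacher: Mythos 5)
The paper itself gives no proof of this statement: Fact 2.4 is recorded as an imported result of Chernikov and Kaplan \citep{chernikov_kaplan}, so the real question is whether your sketch would reconstitute their theorem. The first half of your proposal is correct and is the standard skeleton: the partial type $\Sigma$ is well defined (as you note, whether $\psi(\bar y,a)$ forks over $M$ depends only on $\tp(a/M)=p$), a finite fragment of $\Sigma$ is satisfied by any witness to $(\star)$, and a global completion of the type of a realization of $\Sigma$ is strictly non-forking over $M$ in exactly the sense of the parenthetical. So the reduction of the Fact to $(\star)$ by compactness is fine.

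The gap is $(\star)$ itself, which is essentially the entire content of the cited theorem, and the one concrete mechanism you offer for it does not work as stated. If $(\star)$ fails for $\bar c$, compactness gives some $\theta(x)\in p$ with $\theta(x)\vdash\bigvee_{i<n}\phi_i(x,\bar c)\vee\bigvee_{j<m}\psi_j(\bar c,x)$, where the $\phi_i(x,\bar c)$ fork over $M$ but the $\psi_j(\bar c,x)$ are only known to fork ``in the other variable'': $\psi_j(\bar y,a)$ forks over $M$ for $a\models p$, and nothing forces $\psi_j(\bar c,x)$, as a formula in $x$, to fork over $M$. Hence the failure of $(\star)$ does not transpose into an $L_M$-formula of $p$ forking or dividing over $M$, contrary to your closing claim that ``otherwise one would witness that some $L_M$-formula of $p$ divides over $M$''; that is precisely the non-trivial point, not a contradiction one can read off. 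The broom lemma is needed exactly to sweep away the $\phi_i$-disjuncts (this is where forking equals dividing over $M$ and the NTP$_2$ array combinatorics enter), leaving roughly $\theta(x)\vdash\bigvee_{j,k<N}\psi_j(\bar c_k,x)$ for finitely many $M$-conjugates $\bar c_k$ of $\bar c$; one then applies an automorphism over $M$ so that $\tp(\bar c_0,\dots,\bar c_{N-1}/Ma)$ is finitely satisfiable in $M$ for a fixed $a\models p$, and the contradiction is that some $\psi_j(\bar y,a)$ both forks over $M$ and belongs to a type that does not fork over $M$ --- a clash on the parameter side, not a dividing formula of $p$. Since you neither state the broom lemma's hypotheses nor indicate how the indiscernible sequence you ``spread out'' meets them, and since the contradiction you do state is not available, the hard step remains an appeal to \citep{chernikov_kaplan} --- which is what the paper does anyway, but then the sketch in between should not be presented as a proof.
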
Finally, we also recall that $\ind$ in arbitrary theories has `left-transitivity' (see \citep{adler_forking}): for any small sets $A,B,C,D\subset\mathfrak{C}$, if $A\ind_{(B,C)} D$ and $B\ind_C D$ then $(A,B)\ind_C D$.

\subsection{Connected Components} Recall that, for a small model $M\prec\mathfrak{C}$, $G^{00}_M$ denotes the smallest subgroup of $G$ type-definable over $M$ and of bounded index, and $G^{\infty}_M$ denotes the smallest subgroup of $G$ `invariant over $M$', i.e. setwise invariant under automorphisms fixing $M$ pointwise, and of bounded index; one can think of $G^{\infty}_M$ (resp. $G^{00}_M$) as the smallest subset (resp. smallest closed subset) of $S_G(M)$ whose set of realizations $G^{\infty}_M(\mathfrak{C})$ (resp. $G^{00}_M(\mathfrak{C})$) is a subgroup of $G(\mathfrak{C})$ of index smaller than the degree of saturation of $\mathfrak{C}$. An explicit description of $G^{\infty}_M(\mathfrak{C})$ is as the subgroup of $G(\mathfrak{C})$ generated by $\{a^{-1}b:a,b\in G(\mathfrak{C}),a\equiv_M b\}$; see for example 8.1.4 in \cite{simon_book} for a reference.

Then groups $G^{00}_M$ and $G^\infty_M$ are normal subgroups of $G$, and one can endow the quotient $G/G^{00}_M$ with the `logic topology', where a subset is closed if and only if its preimage under the projection map $G\to G/G^{00}_M$ is type-definable over some small set. This makes $G/G^{00}_M$ into a compact Hausdorff topological group, which is thus endowed with a (unique) translation-invariant Haar measure $h$ such that $h(G/G^{00}_M)=1$.

When $G^{00}_M$, respectively $G^{\infty}_M$, is independent of the choice of $M$, one says that $G^{00}$, respectively $G^{\infty}$, `exists' and drops the subscript. From \citep{shelah_g00} and \citep{gismatullin} respectively, it is known that $G^{00}$ and $G^{\infty}$ always exist if $T$ is NIP.

For $T$ countable and NIP, Hrushovski and Pillay gave a construction in \citep{hrushovski_pillay} to obtain translation-invariant Keisler measures on $G$ from $G^{00}$-invariant types:

\begin{definition}\label{construction of measure}
Suppose $p(x)\in S_G(\mathfrak{C})$ is invariant under left translation by elements of $G^{00}(\mathfrak{C})$. For a $\mathfrak{C}$-definable set $D\subseteq G$, define $S_{p,D}$ to be the subset of $G/G^{00}$ given by the set of cosets $\{gG^{00}:gp(x)\vdash x\in D\}$; this is well-defined by $G^{00}$-invariance of $p$. If the sets $S_{p,D}$ are all Borel, then we define a Keisler measure $\mu_p$ on $G$ by taking $\mu_p(D)=h(S_{p,D})$ for each $D$. Note that $\mu_p$ will be left-invariant by left-invariance of $h$.
\end{definition}

In the case that $T$ is countable and NIP, the sets $S_{p,D}$ given in Definition \ref{construction of measure} will indeed all be Borel. This is proved in \citep{hrushovski_pillay} under the additional hypothesis that $p(x)$ is $M$-invariant over some countable $M\prec\mathfrak{C}$, and a further argument of Chernikov and Simon shows that this hypothesis is not needed; see Definition 3.16 of \citep{chernikov_simon}.\footnote[1]{In \citep{chernikov_simon} this is not expressed that way; Chernikov and Simon assume there that $G$ is definably amenable and that $p(x)$ is f-generic. But all that one needs to apply their argument is that what they denote $p_M$ is $M$-invariant for every small $M\prec\mathfrak{C}$, and this is true whenever $p$ is $G^{00}$-invariant: if $p(x)\vdash x\in aD\setminus bD$ for some $M$-definable $D\subseteq G$ and some $a\equiv_M b$ in $G$, then $ab^{-1}\in G^{00}$ and $ab^{-1}p(x)\vdash x\notin aD$, so that $p$ is not $G^{00}$-invariant.} Thus one has the following fact:

\begin{fact}\label{countable case}
    If $T$ is countable and NIP, and $G$ admits a global $G^{00}$-invariant type, then the group $G$ is definably amenable.
\end{fact}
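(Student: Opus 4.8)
The plan is to carry out the Hrushovski--Pillay construction recalled just above. Fix a global type $p(x)\in S_G(\mathfrak{C})$ invariant under left translation by $G^{00}(\mathfrak{C})$, and let $h$ be the normalized Haar measure on the compact Hausdorff group $G/G^{00}$. For each $\mathfrak{C}$-definable $D\subseteq G$ put $S_{p,D}=\{\bar g\in G/G^{00}:gp(x)\vdash x\in D\}$; by $G^{00}$-invariance of $p$ this depends only on the coset $\bar g$, so $S_{p,D}$ is well-defined, and it depends only on the set $D$, not on any particular formula defining it. I would then set $\mu_p(D)=h(S_{p,D})$ and show that $\mu_p$ is a left-invariant Keisler measure on $G$, witnessing definable amenability of $G$.

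Three things then need checking. The substantive one, and the only place countability of $T$ enters, is that each $S_{p,D}$ is a Borel subset of $G/G^{00}$ in the logic topology, so that $h(S_{p,D})$ is defined. Hrushovski and Pillay prove exactly this in \citep{hrushovski_pillay} under the extra assumption that $p$ is invariant over some countable model; to dispense with that assumption I would invoke the refinement of Chernikov and Simon (Definition~3.16 of \citep{chernikov_simon}), which only requires the type they call $p_M$ to be $M$-invariant for every small $M\prec\mathfrak{C}$, and --- as explained in the footnote above --- this holds automatically whenever $p$ is $G^{00}$-invariant. The remaining two points are routine. Finite additivity and normalization: if $D_1,D_2$ are disjoint $\mathfrak{C}$-definable subsets of $G$ then, since each $gp$ is a complete type, $\bar g\in S_{p,D_1\cup D_2}$ iff $\bar g\in S_{p,D_1}$ or $\bar g\in S_{p,D_2}$, while $S_{p,D_1}\cap S_{p,D_2}=\emptyset$, so $\mu_p(D_1\cup D_2)=\mu_p(D_1)+\mu_p(D_2)$; and $S_{p,G}=G/G^{00}$ gives $\mu_p(G)=1$. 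Left-invariance: for $a\in G(\mathfrak{C})$ one checks directly that $gp\vdash x\in aD$ iff $(a^{-1}g)p\vdash x\in D$, whence $S_{p,aD}=\bar a\cdot S_{p,D}$, and left-invariance of $h$ yields $\mu_p(aD)=\mu_p(D)$.

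I expect the Borel-ness of the sets $S_{p,D}$ to be the main obstacle: it is the genuinely descriptive-set-theoretic input, it is what forces the countability hypothesis, and it is the step at which $G^{00}$-invariance of $p$ --- rather than invariance over a fixed small model, which is what the original Hrushovski--Pillay argument assumed --- has to be leveraged, via the Chernikov--Simon refinement.
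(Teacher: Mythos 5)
Your proposal is correct and is essentially the paper's own justification: the paper records this as a fact precisely on the basis of the Hrushovski--Pillay construction $\mu_p(D)=h(S_{p,D})$ described just before the statement, with Borel-ness of the sets $S_{p,D}$ in the countable case supplied by \citep{hrushovski_pillay} together with the Chernikov--Simon refinement, and with the footnote's observation that $G^{00}$-invariance of $p$ yields the $M$-invariance needed to apply that refinement. Your verification of well-definedness, finite additivity, normalization, and left-invariance fills in exactly the routine steps the paper leaves implicit.
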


We will not need it here, but it was also proved in \cite{hrushovski_pillay} that a definably amenable NIP group admits a global $G^{00}$-invariant type, so Fact \ref{countable case} is in fact an equivalence.

As a brief remark, it is worth mentioning `Petrykowski's conjecture', as in \cite{newelski_bdd}, which says that, for an arbitrary definable group $G$, not necessarily NIP, if there is an orbit of bounded size in the flow $(G(\mathfrak{C}),S_G(\mathfrak{C}))$, then $G$ is definably amenable. The aforementioned Theorem 3.12 of \cite{chernikov_simon} positively resolved Petrykowski's conjecture in the NIP case. The connection with Fact 2.6 is that a global $G^{00}$-invariant type will have bounded orbit, corresponding to the boundedly many cosets of $G^{00}$ in $G$. Without the NIP assumption, Petrykowski's conjecture remains an important open problem in the study of definably amenable groups.

\subsection{f-Genericity}
\subsubsection{General facts}
Here we record a few facts about f-generic formulas from \citep{chernikov_simon}; throughout this section assume $T$ is NIP. In fact, the more appropriate setting for everything in this section is that of NTP$_2$ theories, a common generalization of NIP and simple theories, and everything here is true in the NTP$_2$ case, as proved in Section 3 of \cite{montenegro_onshuus_simon}. But we wish to focus just on the NIP setting for clarity. So we continue assuming $T$ is NIP.

Following \cite{chernikov_simon}, we say that a $\mathfrak{C}$-definable set $D\subseteq G$ is `f-generic' if, for any small model $M$ over which $D$ is defined, the formula $x\in gD$ does not fork over $M$ for all $g\in G$. Likewise, a partial type is called f-generic if it implies only f-generic formulas. By Fact 2.2 and Ramsey's theorem, f-genericity (like genericity) can be characterized purely in terms of the group structure on $G$:

\begin{fact}
    A $\mathfrak{C}$-definable set $D\subseteq G$ is not f-generic if and only if it `$G$-divides', i.e. if and only if there is some $k\in\omega$ and some sequence $(g_i)_{i\in\omega}$ from $G$ such that the family of translates $(g_iD)_{i\in\omega}$ is $k$-inconsistent.
\end{fact} Throughout this paper we will freely use the equivalence between $G$-dividing and non-f-genericity, typically without mention. Now, the following fact is a strengthening of Proposition 3.4 of \citep{chernikov_simon}; the proof is exactly the same as Proposition 3.4 there, but the result there is not stated as such, so we include the proof here for completeness.

\begin{proposition}
    Suppose $D\subseteq G$ is definable over a small model $M$, and that $g\in G(\mathfrak{C})$ is such that $r(x):=\tp(g/M)$ is f-generic. If $g^{-1}D$ does not fork over $M$, then $D$ is f-generic.
\end{proposition}
\begin{proof}
    Suppose $D$ is not f-generic. Then there is an $M$-indiscernible sequence $J=(h_i)_{i\in\omega}$ such that $\bigwedge_{i\in\omega}h_iD=\varnothing$; let $k$ be such that the translates $h_iD$ are $k$-inconsistent. Since $r(x)$ is f-generic and defined over $M$, the translate $h_0r(x)$ does not fork over $M$. In particular, since $J$ is $M$-indiscernible, the partial type $\bigwedge_{i\in\omega}h_ir(x)$ is consistent. Let $u\in G(\mathfrak{C})$ be any realization, and let $g_i=h_i^{-1}u$ for each $i\in\omega$. Then $g_i\models r(x)$, so that $g_i^{-1}\equiv_M g^{-1}$, for each $i\in\omega$. But the translates $g_i^{-1}D$ are $k$-inconsistent; indeed, for any $s\subset\omega$ of size $k$, we have $$\bigwedge_{i\in s}g_i^{-1}D=u^{-1}\bigwedge_{i\in s}h_iD=\varnothing.$$ Thus $g^{-1}D$ divides over $M$.
\end{proof} One has the following standard consequence; the proof is identical to that of Corollary 3.5 in \citep{chernikov_simon}, but we include it for completeness.

\begin{corollary}
    $G$ admits a global f-generic type if and only if the non-f-generic definable subsets of $G$ form an ideal.
\end{corollary}
\begin{proof}
    On the one hand, if the non-f-generic definable subsets form an ideal, then in particular $G$ itself is not a union of finitely many non-f-generic definable subsets. So the collection of formulas $\{\phi(x,c)\in L(\mathfrak{C}):\neg\phi(x,c)\text{ is not f-generic}\}$ is finitely consistent, and any complete global type extending it will be f-generic.

    On the other hand, suppose $G$ admits a global f-generic type $p(x)\in S_G(\mathfrak{C})$, and let $D,E\subseteq\mathfrak{C}$ be non-f-generic definable subsets. Let $M$ be a small model over which both $D,E$ are defined, and let $g\models p|_M$. Then, by Proposition 2.8, both $g^{-1}D$ and $g^{-1}E$ fork over $M$. So in particular $g^{-1}(D\vee E)=g^{-1}D\vee g^{-1}E$ forks over $M$, whence (since $D\vee E$ is defined over $M$) $D\vee E$ is not f-generic.
\end{proof}

In \cite{chernikov_simon} it is shown that, in a \textit{definably amenable} NIP group, a global type is f-generic if and only if it is $G^{00}$-invariant. The proof of the backwards direction does not require definably amenability; indeed, suppose $p(x)\in S_G(\mathfrak{C})$ is not f-generic, and let $D$ be a non-f-generic definable set concentrated on by $p(x)$. Then in particular there is a sequence $(g_i:i\in\omega)$ of elements of $G(\mathfrak{C})$, indiscernible over the parameters defining $D$, such that the conjunction $\bigwedge_{i\in\omega}g_iD$ is inconsistent. So $D\wedge\bigwedge_{i>0}g_0^{-1}g_iD$ is also inconsistent, whence $p(x)$ must not concentrate on some $g_0^{-1}g_iD$. But each $g_0^{-1}g_i$ lies in $G^{00}(\mathfrak{C})$, so that $p(x)$ is not $G^{00}$-invariant.

On the other hand, the proof in \cite{chernikov_simon} of the forwards direction, that a global f-generic type is $G^{00}$-invariant, relies heavily on the definable amenability hypothesis. In fact, the main technical result of our paper here is to show that the forwards direction also holds even without a definable amenability assumption; this is Corollary 3.7.

\subsubsection{Strong f-genericity}
For completeness, let us now discuss strong f-genericity, although it is not a necessary notion for our paper. None of the material in this section is necessary for the results of our paper, but, in order to contextualize our result, it is perhaps worth remarking on how the various notions connect. Following the terminology of \cite{chernikov_simon}, we say that a global type $p(x)\in S_G(\mathfrak{C})$ is (left) `strongly f-generic' if there is some small model $M\prec\mathfrak{C}$ such that no left translate of $p(x)$ forks over $M$; in other words, a global type $p(x)\in S_G(\mathfrak{C})$ is strongly f-generic if and only if there is some small model $M\prec\mathfrak{C}$ such that, for every $\phi(x,b)\in p$, no left translate of $\phi(x,b)$ forks over $M$. In contrast, a global type $p(x)\in S_G(\mathfrak{C})$ is f-generic if and only if, for every $\phi(x,b)\in p$, there is some small model $M\prec\mathfrak{C}$ such that no left translate of $\phi(x,b)$ forks over $M$. This swap of quantifiers in the definition is an essential change, and even in some very basic examples of definably amenable NIP groups, such as $(\mathbb{R}^2,+)$ in $(\mathbb{R},+,\cdot)$, there are unboundedly many types that are f-generic but not strongly f-generic; see Example 3.10 in \cite{chernikov_simon}.\footnote[1]{On the other hand, for groups definable in simple theories, the analogous notions of f-genericity and strong f-genericity coincide, by the results in \cite{pillay_simple}.}

In \textit{definably amenable} NIP groups, the relationship between f-genericity and strong f-genericity is described in \cite{chernikov_simon}. To see it, first note the following easy observation, which is perhaps worth recording: 
\begin{proposition}
Let $G$ be an arbitrary definable group in an arbitrary theory, not necessarily NIP. Suppose that $p(x)\in S_G(\mathfrak{C})$ has bounded orbit under the action of $G(\mathfrak{C})$. Then $p(x)$ is strongly f-generic if and only if there is some small model $M\prec\mathfrak{C}$ such that $p(x)$ does not fork over $M$.
\end{proposition}
\begin{proof}
    For the non-trivial direction, suppose that $p(x)$ does not fork over some small $M\prec\mathfrak{C}$. Since $p(x)$ has bounded orbit under the action of $G(\mathfrak{C})$, we may find elements $\{g_i:i\in I\}$ of $G(\mathfrak{C})$ such that $|I|$ is small and such that, for every $g\in G(\mathfrak{C})$, there is some $i\in I$ with $gp=g_ip$. Letting $N$ be any small model containing $M$ and $\{g_i:i\in I\}$, then no left translate of $p$ forks over $N$.
\end{proof} As remarked at the end of Section 2.3.1, it is shown in \cite{chernikov_simon} that a global f-generic type in a \textit{definably amenable} NIP group $G$ is $G^{00}$-invariant, and hence has bounded orbit. So it follows, as proved in \cite{chernikov_simon}, that, in a \textit{definably amenable} NIP group, a global type is strongly f-generic if and only if it is f-generic and non-forking over some small model. However, this equivalence is not clear without the definable amenability assumption. It is easy to show that the existence of a global f-generic type implies the existence of a global f-generic type that is non-forking over some small model; see Lemma 3.2 below. The hard part is to show that the f-generic type in question has bounded orbit, and, again as mentioned above, that is in fact the main technical result of our paper.

On the other hand, in Proposition 5.6(i) of \cite{hrushovski_pillay}, it is shown that a strongly f-generic type in an NIP group $G$ is $G^{00}$-invariant. Using the argument of Fact 2.6 cited above, it was thus shown in Proposition 5.6(ii) of \cite{hrushovski_pillay} that a definable group in a countable NIP theory which admits a strongly f-generic type is definably amenable.

In fact, Proposition 5.6(ii) of \cite{hrushovski_pillay} claims to show this result even without the countability hypothesis, the idea being that a definable group is definably amenable if and only if it is definably amenable in every reduct of the theory to a countable sublanguage over which it is still defined. However, a key gap in this argument was found by Krupi\'{n}ski, which is that it is non-obvious that a strongly f-generic type will remain strongly f-generic in a reduct. A solution to this problem, which uses Theorem 3.12 in \citep{chernikov_simon}, was later found by Krupi\'{n}ski and Pillay; for the proof we refer the reader to Proposition 3.16 in \citep{pillay_groups}. The upshot is that, if $G$ is a definable group in an NIP theory $T$, and $G$ admits a global strongly f-generic type, then $G$ is definably amenable.

In Remark 5.7 of \cite{hrushovski_pillay}, it is claimed that this result also holds when $G$ is just type-definable. If $T$ is countable, and $G$ is type-definable over a countable parameter set, then this indeed follows by the same proofs of Proposition 5.6(i) and Proposition 5.6(ii) in \cite{hrushovski_pillay}. More generally, if $T$ is not necessarily countable, but $G$ is type-definable by an intersection of countably many formulas, then the result also holds, and can be proved by adapting the arguments of Krupi\'{n}ski and Pillay described in Proposition 3.16 of \cite{pillay_groups}. However, in the case where $G$ is not type-definable by an intersection of countably many formulas, neither of those arguments works, since there is no clear way to reduce to the case of a countable language and a countable parameter set. In Section 5 we will prove that, if $G$ is type-definable and admits a global f-generic type, then $G$ is definably amenable; this will in particular give the first proof for the general claim made in Remark 5.7 of \cite{hrushovski_pillay}.

\section{Results}
Now we can begin proving the result. As always, we assume throughout that $T$ is NIP and that $G$ is a definable group of $T$.
\subsection{Strict Morley Sequences}
First we need the following general observation.\footnote[1]{Thank you to Itay Kaplan for pointing out a shorter argument for this than I originally had.}
\begin{lemma}
    Let $M$ be a small model, and suppose $a,b\in\mathfrak{C}$ are such that $a\equiv_M b$. Suppose also that $q(x,y)\in S(\mathfrak{C})$ is a global extension of $\tp(a,b/M)$ strictly non-forking over $M$, and that $(a_i,b_i)_{i\in\omega}\models q^{\otimes\omega}|_M$ is a Morley sequence of $q$ over $M$. Then, for every $n\in\omega$, there is a model $N\prec\mathfrak{C}$ containing $(M,a_{\neq n},b_{\neq n})$ and such that $a_n\equiv_N b_n$. 
\end{lemma}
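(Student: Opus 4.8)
The plan is to split the argument into a structural step --- extracting from strictness a model $N$ that is non-forking-independent over $M$ from the \emph{pair} $(a_n,b_n)$ --- and a short automorphism argument that converts this, together with $a\equiv_M b$ and invariance, into $a_n\equiv_N b_n$. Fix $n$ and abbreviate $c:=(a_n,b_n)$ and $E:=(a_i,b_i)_{i\neq n}$. First I would establish $E\ind_M c$. Since $q$ is strictly non-forking, hence $M$-invariant (Fact 2.1), and $(a_i,b_i)_{i\in\omega}$ is a Morley sequence of $q$ over $M$, the tail $(a_i,b_i)_{i>n}$ is a Morley sequence of $q$ over $Mc_{\leq n}$; so $\tp((a_i,b_i)_{i>n}/Mc_{\leq n})$ does not fork over $M$, giving $(a_i,b_i)_{i>n}\ind_{Mc_{<n}}c$. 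And since $c=c_n$ realizes $q$ over $Mc_{<n}$, the strictness of $q$ --- specifically the half of Fact 2.4 that plain invariance would not supply --- yields $c_{<n}\ind_M c$. Left-transitivity of $\ind$ then combines these into $E\ind_M c$. (For $n=0$ this is immediate; the initial segment is the only place strictness is used.)

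Next, I would extend $Mc_{\neq n}=ME$ to an enumeration of a small model and, using $E\ind_M c$ and left-extension (Fact 2.3), move it over $Mc_{\neq n}$ to a model $N\supseteq Mc_{\neq n}$ with $N\ind_M c$. Then $\tp(N/Ma_nb_n)$ does not fork over $M$, so it extends to a global $M$-invariant type $r$. Now choose $\sigma\in\mathrm{Aut}(\mathfrak{C}/M)$ with $\sigma(a_n)=b_n$; by $M$-invariance $\sigma(r)=r$, and since $N\models r|_{Ma_n}$, applying $\sigma$ gives $\sigma(N)\models r|_{Mb_n}=\tp(N/Mb_n)$, i.e.\ $(\sigma(N),b_n)\equiv_M(N,b_n)$. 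Since also $(N,a_n)\equiv_M(\sigma(N),b_n)$ (apply $\sigma$), we obtain $(N,a_n)\equiv_M(N,b_n)$; a witnessing $M$-automorphism fixes $N$ pointwise and sends $a_n$ to $b_n$, so $\tp(a_n/N)=\tp(b_n/N)$, as required.

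The step I expect to be the real obstacle --- and the reason the construction must be organized around the pair $(a_n,b_n)$ rather than $a_n$ and $b_n$ separately --- is that over the model $N$ the non-forking (equivalently, $M$-invariant) extension of $\tp(a_n/M)$ need not be unique, so one cannot argue that $\tp(a_n/N)$ and $\tp(b_n/N)$ agree just because each is ``the'' non-forking extension. Arranging $N\ind_M(a_n,b_n)$ produces a single invariant type $r$ extending $\tp(N/Ma_nb_n)$, and it is exactly this $r$ --- simultaneously controlling $\tp(N/Ma_n)$ and $\tp(N/Mb_n)$ --- that makes the translate of $N$ by $\sigma$ realize the right type over $Mb_n$. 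Securing this pair-independence, and in particular getting $c_{<n}\ind_M c$ with the pair $c$ on the right, is precisely where strict Morley sequences are needed rather than ordinary ones.
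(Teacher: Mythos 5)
Your proof is correct and follows essentially the same route as the paper's: strictness gives $c_{<n}\ind_M(a_n,b_n)$, invariance of $q^{\otimes\omega}$ handles the tail, left-transitivity and left-extension produce a model $N\supseteq(M,a_{\neq n},b_{\neq n})$ with $N\ind_M(a_n,b_n)$, and an $M$-invariant global extension of $\tp(N/M,a_n,b_n)$ yields $a_n\equiv_N b_n$. The only difference is that you spell out the final automorphism argument that the paper compresses into one line, which is a fine elaboration rather than a different approach.
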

\begin{proof} Fix $n\in\omega$. Since $(a_n,b_n)\models q|_{M,a_{<n},b_{<n}}$, we have $(a_{<n},b_{<n})\ind_M (a_n,b_n)$ by strict non-forking of $q$. Moreover, $q^{\otimes\omega}$ is an $M$-invariant type, and $(a_i,b_i)_{i>n}$ realizes its restriction to $(M,a_{\leqslant n},b_{\leqslant n})$, so that $(a_{>n},b_{>n})\ind_M (a_{\leqslant n},b_{\leqslant n})$; in particular $(a_{>n},b_{>n})\ind_{(M,a_{<n},b_{<n})}(a_n,b_n)$. By left-transitivity we thus have $(a_{\neq n},b_{\neq n})\ind_M(a_n,b_n)$, and by Fact 2.3 there is now a model $N$ containing $(M,a_{\neq n},b_{\neq n})$ and such that $N\ind_M(a_n,b_n)$. By Fact 2.1, $\tp(N/M,a_n,b_n)$ extends to a global $M$-invariant type, and since $a_n\equiv_M b_n$ this implies that $a_n\equiv_N b_n$, as needed.
\end{proof}

\subsection{f-Generic Types}
Now we record a few lemmas on f-genericity. First let us make an observation; suppose $D\subseteq G$ is an f-generic set definable over a small model $M$, and that $a,b\in G(\mathfrak{C})$ are elements of $G$ with $a\equiv_M b$. By f-genericity, the formula $x\in aD$ does not fork over $M$, and so is contained in some global $M$-invariant type; this type must then also contain the formula $x\in bD$, and so one concludes that the intersection $aD\wedge bD$ (and hence $D\wedge a^{-1}bD$) is non-empty. If there is a global f-generic type, then this set will in fact also be f-generic. To see this we need the following preliminary\footnote[2]{Note that f-generic types automorphism-invariant over a small model were shown to exist in \cite{hrushovski_pillay} in definably amenable NIP groups, in which they coincide precisely with the `strongly f-generic' types. The point here is to obtain such a type without a definable amenability assumption, just assuming the existence of a global f-generic type.}:

\begin{lemma} Let $M$ be a small model. If $G$ admits a global f-generic type, then it admits a global $M$-invariant f-generic type.
\end{lemma}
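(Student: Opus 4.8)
The plan is to pass through the ideal of non-f-generic sets and build the desired type by a transfinite construction. Fix a global f-generic type $p$, and set $r:=p|_M\in S_G(M)$; since $p$ implies only f-generic formulas, so does $r$, i.e. $r$ is an f-generic type over $M$. By the remark following Fact 2.6 (this is the content of Corollary 3.5 of \citep{chernikov_simon}), the hypothesis that $G$ has a global f-generic type tells us that the $\mathfrak{C}$-definable non-f-generic subsets of $G$ form a proper ideal $\mathcal{I}$. Using the purely group-theoretic characterization of non-f-genericity recorded after Fact 2.2, $\mathcal{I}$ is invariant under $\mathrm{Aut}(\mathfrak{C})$ and under left translation, and a complete global type $q\in S_G(\mathfrak{C})$ is f-generic if and only if $q$ meets no member of $\mathcal{I}$.

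Next I would observe that the partial type $\Sigma(x):=r(x)\cup\{\,\lnot(x\in E):E\in\mathcal{I}\,\}$ is consistent and that every complete global extension of it is an f-generic type extending $r$. Consistency is clear since $p$ itself realizes $\Sigma$; alternatively, a finite fragment of $\Sigma$ has the form $\psi(x)\wedge\lnot(x\in E)$ with $\psi\in r$ and $E\in\mathcal{I}$ (finitely many sets of $\mathcal{I}$ can be absorbed into one, as $\mathcal{I}$ is an ideal), and $\psi(\mathfrak{C})$ is f-generic, hence not contained in $E$, so $\psi(\mathfrak{C})\setminus E\neq\varnothing$. Any completion $q$ of $\Sigma$ is disjoint from $\mathcal{I}$, hence f-generic, and satisfies $q|_M=r$. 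So the whole problem reduces to finding a completion of $\Sigma$ that is $M$-invariant, equivalently (since $\Sigma$ is $\mathrm{Aut}(\mathfrak{C}/M)$-invariant) a fixed point of $\mathrm{Aut}(\mathfrak{C}/M)$ inside the nonempty closed set $[\Sigma]\subseteq S_G(\mathfrak{C})$ of its completions.

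To produce such a fixed point I would build the type by recursion on an enumeration $(\psi_\alpha(x,c_\alpha))_{\alpha<\lambda}$ of all $L_{\mathfrak{C}}$-formulas in $x$, maintaining at each stage a consistent, $\mathrm{Aut}(\mathfrak{C}/M)$-invariant partial type $\Sigma_\alpha\supseteq\Sigma$ all of whose finite fragments are f-generic; at a successor stage one adjoins to $\Sigma_\alpha$ the full $\mathrm{Aut}(\mathfrak{C}/M)$-orbit either of $\psi_\alpha(x,c_\alpha)$ or of $\lnot\psi_\alpha(x,c_\alpha)$. If this can be carried out, the union is a complete $M$-invariant global type disjoint from $\mathcal{I}$, i.e. a global $M$-invariant f-generic type, as wanted. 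The main obstacle is precisely the successor step: one must show that at least one of the two choices keeps every finite fragment f-generic. The delicate point is that, for a single conjunction $D$ of formulas already in $\Sigma_\alpha$ and a single parameter $c$, the sets $D\cap\psi_\alpha(\mathfrak{C},c)$ and $D\cap\lnot\psi_\alpha(\mathfrak{C},c)$ cover $D$, so (as $\mathcal{I}$ is an ideal and $D$ is f-generic) not both can lie in $\mathcal{I}$ — but which one is f-generic may depend on the conjugate $c$, so one needs to exploit the $\mathrm{Aut}(\mathfrak{C}/M)$-invariance of $\Sigma_\alpha$ together with the $\mathrm{Aut}(\mathfrak{C})$-invariance and translation-invariance of $\mathcal{I}$ to make a coherent choice of orbit; organizing this correctly (possibly by enumerating $L_M$-definable families and their generic instances rather than raw formulas) is where the real work of the lemma lies.
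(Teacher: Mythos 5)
There is a genuine gap, and it sits exactly where you locate it: the successor step of your transfinite construction is not carried out, and it is not clear it can be carried out by a direct combinatorial choice of orbits. If the orbit of $\psi_\alpha(x,c_\alpha)$ cannot be added, all you learn is that some finite conjunction $D\wedge\psi_\alpha(x,c_1)\wedge\dots\wedge\psi_\alpha(x,c_n)$ (with $D$ a fragment of $\Sigma_\alpha$ and $c_i\equiv_M c_\alpha$) is non-f-generic; if the orbit of $\lnot\psi_\alpha(x,c_\alpha)$ cannot be added, you get the analogous statement with negations and possibly different conjugates. These two pieces of information do not combine into a contradiction with the f-genericity of the finite fragments of $\Sigma_\alpha$, because the two offending sets need not cover any single f-generic set: a point of $D\wedge D'$ may satisfy some of the $\psi_\alpha(x,c_i)$ and fail others. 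So the ``real work'' you defer is the whole content of the lemma, and the covering argument you use at the level of a single parameter does not survive passing to orbits.

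The way to close the gap is not a finer enumeration but the forking machinery the paper has already set up, and this is how the paper argues. Your partial type $\Sigma(x)$ (in fact just the collection $\{x\notin E:E\ \text{non-f-generic}\}$, which is all one needs) is consistent and $\mathrm{Aut}(\mathfrak{C}/M)$-invariant, hence it does not \emph{divide} over $M$: any formula it implies has its whole $M$-conjugacy class implied, so along any $M$-indiscernible sequence of conjugates the instances are simultaneously consistent. By Fact 2.2 (forking equals dividing over models, in NIP/NTP$_2$) it therefore does not fork over $M$, so it extends to a global type non-forking over $M$, which by Fact 2.1 is $M$-invariant; any such completion contains $x\notin E$ for every non-f-generic $E$, hence is f-generic. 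In other words, the existence of an $M$-invariant completion of an $M$-invariant consistent partial type is itself an NIP-specific fact obtained through Facts 2.1 and 2.2, not something one should expect to manufacture by hand via an orbit-by-orbit extension; your reduction to that extension problem is fine, but the proposal stops exactly where these facts are needed.
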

\begin{proof}
    Let $\pi(x)$ be the partial type containing the formula $x\notin D$ for every $\mathfrak{C}$-definable set $D\subseteq G$ that is not f-generic. By hypothesis, $\pi(x)$ is consistent. It is also $M$-invariant, since the property of being f-generic is preserved under automorphisms. Thus $\pi(x)$ does not divide over $M$. By Fact 2.2, this means $\pi(x)$ does not fork over $M$. By Fact 2.1, $\pi(x)$ thus extends to a global $M$-invariant type, which is then f-generic, as needed.
\end{proof}

Now we can obtain the desired strengthening of the observation above.
\begin{lemma}
    Suppose there is a global f-generic type. Then, for any f-generic set $D\subseteq G$ definable over a small model $M$, and any $a,b\in G(\mathfrak{C})$ with $a\equiv_M b$, the intersection $D\wedge a^{-1}bD$ is f-generic.
\end{lemma}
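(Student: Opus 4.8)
The plan is to reduce to showing that $aD\wedge bD$ is f-generic — equivalently $D\wedge a^{-1}bD$, since f-genericity is invariant under left translation — and to deduce this from Fact 2.6 applied not to $aD\wedge bD$ itself, but to a translate of it by a suitably generic element. To set up, fix a small model $M'\supseteq M\cup\{a,b\}$, so that $aD\wedge bD$ is definable over $M'$; using Lemma 3.2 fix a global $M$-invariant f-generic type $p'$; and let $g^*\in G(\mathfrak{C})$ realize $p'|_{M'}$, so that $\tp(g^*/M')$ is f-generic, being a restriction of $p'$.

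The heart of the matter is to show that $(g^*)^{-1}(aD\wedge bD)$ does not fork over $M'$; granting this, Fact 2.6 (with $M'$ in place of $M$ and $g^*$ in place of $g$) yields at once that $aD\wedge bD$ is f-generic. To establish the non-forking, I first claim that $(g^*)^{-1}a\equiv_M(g^*)^{-1}b$. Choose an automorphism $\sigma$ of $\mathfrak{C}$ fixing $M$ pointwise with $\sigma(a)=b$. Then $(a,g^*)\equiv_M(b,\sigma(g^*))$ via $\sigma$; moreover $\sigma(g^*)$ realizes $\sigma(p'|_{Ma})=p'|_{Mb}=\tp(g^*/Mb)$ — here using that $\tp(g^*/Ma)=p'|_{Ma}$ and that $p'$ is $M$-invariant — so $\sigma(g^*)\equiv_{Mb}g^*$. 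Applying the $M$-definable map $(x,y)\mapsto y^{-1}x$ to the first equivalence and the $Mb$-definable map $z\mapsto z^{-1}b$ to the second, we get $(g^*)^{-1}a\equiv_M\sigma(g^*)^{-1}b\equiv_M(g^*)^{-1}b$. Write $a':=(g^*)^{-1}a$ and $b':=(g^*)^{-1}b$, so $a'\equiv_M b'$ and $(g^*)^{-1}(aD\wedge bD)=a'D\wedge b'D$.

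Now I run the observation preceding the lemma, over $M$ and for the pair $a',b'$: since $D$ is f-generic over $M$, the formula $x\in a'D$ does not fork over $M$, so by Fact 2.1 it belongs to a global $M$-invariant type $r'$; applying an automorphism of $\mathfrak{C}$ over $M$ taking $a'$ to $b'$ shows $r'$ also contains $x\in b'D$, hence $r'\vdash x\in a'D\wedge b'D$. As $M\subseteq M'$, the type $r'$ is in particular $M'$-invariant, so by Fact 2.1 it does not fork over $M'$; therefore the formula $x\in a'D\wedge b'D$, that is $x\in(g^*)^{-1}(aD\wedge bD)$, does not fork over $M'$, which is exactly what was needed.

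I expect the main obstacle to be precisely this upgrade from non-emptiness to f-genericity of $aD\wedge bD$: the naive idea of completing $x\in aD$ to a global f-generic type destroys $M$-invariance (the formula $x\in aD$ is only $Ma$-invariant), so one cannot directly transport it to $x\in bD$ while retaining f-genericity. Pre-composing with the translation by the f-generic element $g^*$, and exploiting the $M$-invariance of $p'$ to obtain $(g^*)^{-1}a\equiv_M(g^*)^{-1}b$, is the device that lets the observation and Fact 2.6 fit together; the remaining manipulations — the translate identities and the final invocation of Fact 2.6 — are routine.
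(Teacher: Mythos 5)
Your proof is correct and follows essentially the same route as the paper's: reduce by translation-invariance to $aD\wedge bD$, translate by a realization $g^*$ of a global $M$-invariant f-generic type (Lemma 3.2) over a small model containing $M,a,b$, use that invariance to get $(g^*)^{-1}a\equiv_M(g^*)^{-1}b$, and conclude via Fact 2.6. The only divergence is in verifying the hypothesis of Fact 2.6: the paper checks that the translate does not divide over the larger model by an explicit indiscernible-sequence computation, whereas you obtain the non-forking directly by placing the translated formula $x\in a'D\wedge b'D$ inside a global $M$-invariant (hence $M'$-invariant) type --- a slightly more streamlined route to the same conclusion.
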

\begin{proof} By Lemma 3.2, let $q(x)\in S_G(\mathfrak{C})$ be f-generic and $M$-invariant. As f-genericity is translation-invariant, it suffices to show that $E:=aD\wedge bD$ is f-generic. Let $N$ be any small model containing $(M,a,b)$ and let $g\models q|_N$; then by Fact 2.8 it suffices to show that $g^{-1}E=g^{-1}aD\wedge g^{-1}bD$ does not fork over $N$. Since $D$ is f-generic, $g^{-1}aD$ does not fork over $M$, and so is contained in some global $M$-invariant type $p(x)$. Moreover, $\tp(g/M,a,b)$ extends to the global $M$-invariant type $q$; since $a\equiv_M b$ this implies $a\equiv_{M,g}b$ and hence $g^{-1}a\equiv_M g^{-1}b$. But $p(x)\vdash x\in g^{-1}aD$, so (since $p$ is $M$-invariant) this implies $p(x)\vdash x\in g^{-1}bD$. So $p$ is concentrated on $g^{-1}E$, which hence does not fork over $M$, and in particular does not fork over $N$, as needed.
\end{proof} As a consequence we get the following key lemma (recall that $[n]=\{1,\dots,n\}$ and that $[0]=\varnothing$):

\begin{corollary}
    Let $M$ be a small model, and let $I=(c_i)_{i\in\omega}$ be an $M$-indiscernible sequence of elements of $G(\mathfrak{C})$ with the following property: for every $n\in\omega$, there is a model $N$, containing $(M,c_{\neq n})$, and a pair of elements $a,b\in G(\mathfrak{C})$ with $a\equiv_N b$ and $a^{-1}b=c_n$.

    Suppose also that there is a global f-generic type. Then, for any $M$-definable set $D\subseteq G$, the partial type $\{x\in D\}\cup\{x\notin c_iD:i\in\omega\}$ is not f-generic.
\end{corollary}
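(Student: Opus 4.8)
My plan is to argue by contradiction: assuming $\pi:=\{x\in D\}\cup\{x\notin c_iD:i\in\omega\}$ is f-generic, I will produce a formula with the independence property, contradicting NIP. So suppose $\pi$ is f-generic. Since $\pi$ implies, for every finite $S\subseteq\omega$, the formula ``$x\in D\wedge\bigwedge_{i\in S}x\notin c_iD$'', all such formulas are f-generic; in particular $D$ itself is f-generic.

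The heart of the matter is the following claim, which I would prove by induction on $|S_0|$: for all finite disjoint $S_0,S_1\subseteq\omega$, the set $E_{S_0,S_1}:=D\cap\bigcap_{i\in S_0}c_iD\cap\bigcap_{i\in S_1}\neg c_iD$ is f-generic. The base case $S_0=\varnothing$ is exactly the observation of the previous paragraph. For the inductive step, suppose $E_{S_0,S_1}$ is f-generic and pick $j\notin S_0\cup S_1$. Since $j$ is distinct from every index occurring in $E_{S_0,S_1}$, that set is definable over the model $N$ witnessing the special property for $n=j$ (recall $N\supseteq(M,c_{\neq j})$), and there are $a,b\in G$ with $a\equiv_N b$ and $a^{-1}b=c_j$. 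Applying Lemma 3.3 over $N$ to the f-generic, $N$-definable set $E_{S_0,S_1}$ yields that $E_{S_0,S_1}\cap c_jE_{S_0,S_1}$ is f-generic; since $c_jE_{S_0,S_1}\subseteq c_jD$, this set is contained in $E_{S_0\cup\{j\},S_1}$, and since the non-f-generic sets form an ideal (a global f-generic type exists), $E_{S_0\cup\{j\},S_1}$ is f-generic, completing the induction.

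Granting the claim, every $E_{S_0,S_1}$ is non-empty, so for each $S\subseteq\omega$ the partial type
\[p_S(x):=\{x\in D\}\cup\{c_i^{-1}x\in D:i\in S\}\cup\{c_i^{-1}x\notin D:i\in\omega\setminus S\}\]
is finitely satisfiable — a finite subset is realized in some $E_{S_0,S_1}$ with finite $S_0\subseteq S$ and $S_1\subseteq\omega\setminus S$ — hence consistent; let $d_S\models p_S$. Writing $\phi(x;y)$ for the formula ``$y^{-1}x\in D$'', we then have $\models\phi(d_S;c_i)$ iff $i\in S$, so $(c_i)_{i\in\omega}$ together with $(d_S)_{S\subseteq\omega}$ witnesses that $\phi$ has the independence property, the desired contradiction. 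The main obstacle, I expect, is not the execution but the realization that the contradiction should come from NIP in this way: one cannot hope to contradict f-genericity of $\pi$ merely through the ideal structure, since a single f-generic set can contain infinitely many pairwise disjoint f-generic subsets. The special property of $I$ is used precisely to insert the translates $c_jD$ one at a time, each over a model that omits exactly the element $c_j$ being inserted.
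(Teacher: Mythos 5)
Your proof is correct, and its engine is the same as the paper's: the special property of $I$ lets you apply Lemma 3.3 one index at a time, intersecting an already f-generic set with its translate by $c_j$ over a model containing all the other parameters. Where you diverge is the endgame. The paper splits $I$ into even and odd indices, keeps the infinite partial type $\pi(x)=\{x\in D\}\cup\{x\notin c_{2i}D:i\in\omega\}$, and inductively shows that $\pi(x)\wedge\bigwedge_{i\in\omega}c_{2i+1}\pi(x)$ is f-generic (which is why it must invoke ``Lemma 3.3 and compactness''); consistency of that type then yields a point whose $D$-pattern alternates infinitely along the indiscernible sequence, contradicting NIP via the alternation bound. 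You instead stay with definable sets $E_{S_0,S_1}$, show every finite positive/negative pattern of the translates $c_iD$ inside $D$ is f-generic, hence non-empty, and contradict NIP by exhibiting the independence property for the formula $y^{-1}x\in D$ directly. Two small dividends of your route: Lemma 3.3 is only ever applied to definable sets (compactness enters only at the very end, to realize the types $p_S$ in the monster), and you never use $M$-indiscernibility of $I$, so you in fact establish the corollary under a weaker hypothesis than stated. One cosmetic remark: upward monotonicity of f-genericity (a definable superset of an f-generic set is f-generic) already follows from the $G$-dividing characterization, so the appeal to the ideal property in your inductive step is not needed, though it is of course available given the standing assumption of a global f-generic type.
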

\begin{proof}
    Suppose otherwise for contradiction, and let $\pi(x)=\{x\in D\}\cup\{x\notin c_{2i}D:i\in\omega\}$; then $\pi(x)$ is f-generic. We claim that $\pi(x)\wedge\bigwedge_{i\in\omega}c_{2i+1}\pi(x)$ is f-generic; by compactness it suffices to show that $\sigma_n(x):=\pi(x)\wedge\bigwedge_{i\in[n]}c_{2i-1}\pi(x)$ is f-generic for each $n\in\omega$, and we prove this by induction on $n$. The base case $n=0$ is by hypothesis, and for the inductive step assume we have shown that $\sigma_n(x)$ is f-generic. Now, $\sigma_n$ is a partial type defined over $(M,(c_{2i})_{i\in\omega},(c_{2i-1})_{i\in[n]})$. In particular, by the hypothesis on $I$, there is a small model $N$ such that $\sigma_n(x)$ is defined over $N$ and such that $c_{2n+1}=a^{-1}b$ for some $a,b\in G(\mathfrak{C})$ with $a\equiv_N b$. Since $\sigma_n(x)$ is f-generic, by Lemma 3.3 and compactness the intersection $\sigma_n(x)\wedge a^{-1}b\sigma_n(x)$, i.e. $\sigma_n(x)\wedge c_{2n+1}\sigma_n(x)$, is thus f-generic. But this intersection implies $\sigma_n(x)\wedge c_{2n+1}\pi(x)=\sigma_{n+1}(x)$, so the result follows.

    So indeed $\pi(x)\wedge\bigwedge_{i\in\omega}c_{2i+1}\pi(x)$ is f-generic. In particular, it is consistent. But it contains the formulas $x\notin c_{2i}D$ and $x\in c_{2i+1}D$ for every $i\in\omega$; since $I$ is indiscernible this contradicts NIP.
\end{proof}
\subsection{Main Result}
Now we are ready to prove the main result; we continue to assume that $T$ is NIP. 
\begin{lemma}
    Let $p(x)\in S_G(\mathfrak{C})$ be a global f-generic type, $M$ a small model, and $D$ an $M$-definable set such that $p(x)\vdash x\in D$. Then the partial type $\{x\in a^{-1}bD:a,b\in G(\mathfrak{C}),a\equiv_M b\}$ is f-generic.
\end{lemma}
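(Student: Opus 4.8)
The plan is to unwind the definition. Writing $\Sigma(x)$ for the displayed partial type, $\Sigma$ is f-generic exactly when every finite intersection $\bigcap_{j=1}^{n}a_j^{-1}b_jD$ with all $a_j\equiv_M b_j$ is f-generic: every formula implied by $\Sigma$ contains such a finite intersection, f-genericity passes to supersets, and conversely each such finite intersection is implied by $\Sigma$. Since $\Sigma$ also contains $x\in D$ (take $a=b$), and again since f-genericity passes to supersets, it is enough to show that $D\cap\bigcap_{j=1}^{n}a_j^{-1}b_jD$ is f-generic whenever every $a_j\equiv_M b_j$. I would prove this by induction on $n$. The case $n=0$ is exactly ``$D$ is f-generic'', which holds because $p\vdash x\in D$ and every formula in a global f-generic type is f-generic.

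For the inductive step, put $E:=D\cap\bigcap_{j=1}^{n}a_j^{-1}b_jD$, which is f-generic by hypothesis and definable over some small model $N\supseteq M$ containing $a_1,b_1,\dots,a_n,b_n$. Given a new pair with $a_{n+1}\equiv_M b_{n+1}$, we must show $E\cap a_{n+1}^{-1}b_{n+1}D$ is f-generic; as $E\subseteq D$, it suffices to show $E\cap a_{n+1}^{-1}b_{n+1}E$ is f-generic. If we knew $a_{n+1}\equiv_N b_{n+1}$ this would be immediate from Lemma 3.3, and I expect the change of base model here — we control the new pair only over $M$, while $E$ lives over the larger model $N$ — to be the main obstacle in the whole proof.

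To deal with it I would use Lemma 3.1. Let $q(x,y)$ be a global extension of $\tp(a_{n+1},b_{n+1}/M)$ strictly non-forking over $M$ (Fact 2.4), and let $(a^i,b^i)_{i\in\omega}$ be a Morley sequence of $q$ over the small model $N^{\ast}:=(N,a_{n+1},b_{n+1})$. Running the argument of Lemma 3.1 (strict non-forking gives $(N^{\ast},(a^k,b^k)_{k<i})\ind_M(a^i,b^i)$, $M$-invariance of $q^{\otimes\omega}$ gives $(a^k,b^k)_{k>i}\ind_M(N^{\ast},(a^k,b^k)_{k\leqslant i})$, left-transitivity combines them, and Facts 2.3 and 2.1 together with $a^i\equiv_M b^i$ finish it) yields: the sequence $\bigl((a_{n+1},b_{n+1}),(a^0,b^0),(a^1,b^1),\dots\bigr)$ is a Morley sequence of $q$ over $M$, hence $M$-indiscernible, and for each $i$ there is a small model $N_i\supseteq(N^{\ast},(a^k,b^k)_{k\neq i})$ with $a^i\equiv_{N_i}b^i$, in particular $a^i\equiv_N b^i$. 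Set $c_{-1}:=a_{n+1}^{-1}b_{n+1}$ and $c_i:=(a^i)^{-1}b^i$ for $i\geqslant0$; then $(c_i)_{i\geqslant-1}$ is $M$-indiscernible, its tail $(c_i)_{i\geqslant0}$ satisfies the hypothesis of Corollary 3.4 over $N$, every $c_i$ is conjugate over $M$ to $c_{-1}$, and $E\cap c_iE$ is f-generic for every $i\geqslant0$ by Lemma 3.3 (since $a^i\equiv_N b^i$ and $E$ is f-generic over $N$).

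It remains to transfer f-genericity back from the generic translates $c_i$ $(i\geqslant0)$ to $c_{-1}=a_{n+1}^{-1}b_{n+1}$. For this I would run the amplification of Corollary 3.4's proof along the tail $(c_i)_{i\geqslant0}$ — legitimate at each stage because $c_i$ is realized by $(a^i)^{-1}b^i$ with $a^i\equiv_{N_i}b^i$ over the model $N_i$ containing everything else — to produce an f-generic, hence consistent, common refinement of $E$ with all of the $c_iE$, and then invoke the $M$-indiscernibility of the full sequence $(c_i)_{i\geqslant-1}$ together with NIP, exactly as in the closing step of Corollary 3.4's proof, to rule out the failure of f-genericity of $E\cap c_{-1}E$. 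This completes the induction. The argument thus combines Lemma 3.1 (to put the new pair into a generic configuration), Lemma 3.3 (to amplify one generic translate), and Corollary 3.4 (to control the resulting infinite family), with the change of base model being precisely what requires all three.
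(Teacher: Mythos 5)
Your reductions and your generalized form of Lemma 3.1 are fine: prepending $(a_{n+1},b_{n+1})$ to a Morley sequence of $q$ over $N^{\ast}$ does give an $M$-indiscernible sequence, and Lemma 3.3 does make each $E\cap c_iE$ ($i\geq 0$) f-generic. The gap is the final transfer step, where you claim to ``rule out the failure of f-genericity of $E\cap c_{-1}E$'' by the closing NIP argument of Corollary 3.4. That argument needs \emph{infinitely many alternations}: a single element lying in $c_iD$ for infinitely many $i$ and outside $c_jD$ for infinitely many interleaved $j$ along one indiscernible sequence. In your situation the putative bad index is the single index $-1$; your amplification produces only positive conditions $x\in E$, $x\in c_iE$ ($i\geq 0$), and an element satisfying all of these while avoiding $c_{-1}E$ changes truth value exactly once, which is perfectly compatible with NIP. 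Nor can you propagate the assumed failure at $-1$ to the other indices by indiscernibility: $E$ is defined over $N$, the sequence $(c_i)_{i\geq -1}$ is only $M$-indiscernible, and by construction the $c_i$ with $i\geq 0$ are generic over $N^{\ast}$ while $c_{-1}$ is arbitrary, so in general they have different types over $N$ --- this asymmetry is built into your construction. Finally, applying Corollary 3.4 to the full sequence with the $N$-definable set $E$ is also blocked: its hypothesis at index $-1$ would require a model containing $N$ over which $a_{n+1}\equiv b_{n+1}$, and producing such a model for an arbitrary $M$-equivalent pair is essentially Corollary 3.6, which lies downstream of this lemma.

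Compare with the paper's proof, which goes in the opposite direction: it assumes some finite intersection $\bigwedge_i a_i^{-1}b_iD$ is \emph{not} f-generic, transfers this negative statement (an $M$-invariant property of $\tp(a_1,\dots,b_n/M)$, since $D$ is $M$-definable) to every element of a strict Morley sequence of the \emph{whole} tuple, and then uses completeness of the global f-generic type $p$ plus pigeonhole to find one coordinate $i$ and infinitely many copies $k$ with $p(x)\vdash x\notin a_{ki}^{-1}b_{ki}D$. These infinitely many negative instances, realized coherently inside the single f-generic type $p$ concentrated on $D$, are exactly what feed the alternation argument inside Corollary 3.4. Your induction never uses $p$ after the base case, and that is the symptom of the gap: the global type is what supplies the infinitely many negative conditions that make the NIP argument run, and your transfer step has nothing to replace it.
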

\begin{proof}
    Suppose otherwise. Then there are $n\in\omega$ and $(a_1,b_1),\dots,(a_n,b_n)$ such that $a_i\equiv_M b_i$ for each $i$ and such that $\bigwedge_{i\in[n]}a_i^{-1}b_iD$ is not f-generic. By Fact 2.4, let $q(x_1,y_1,\dots,x_n,y_n)$ be a global extension of $\tp(a_1,b_1,\dots,a_n,b_n/M)$ strictly non-forking over $M$, and let $(a_{k1},b_{k1},\dots,a_{kn},b_{kn})_{k\in\omega}\models q^{\otimes\omega}|_M$ be a Morley sequence of $q$ over $M$. In particular, $(a_{k1},\dots,b_{kn})$ has the same type as $(a_1,\dots,b_n)$ over $M$ for every $k\in\omega$, so the set $\bigwedge_{i\in[n]}a_{ki}^{-1}b_{ki}D$ is not f-generic for every $k\in\omega$. Since the type $p$ is f-generic, by the pigeonhole principle there is hence some $i\in[n]$ and some infinite subset $s\subseteq\omega$ with $p(x)\vdash x\notin a_{ki}^{-1}b_{ki}D$ for all $k\in s$.

    Note that the restriction of $q$ to the variables $(x_i,y_i)$ is a global extension of $\tp(a_i,b_i/M)$ strictly non-forking over $M$. So, by Lemma 3.1, for every $n\in\omega$ there is a small model $N$ containing $(M,a_{ki},b_{ki}:k\neq n)$ and such that $a_{ni}\equiv_N b_{ni}$.
    
    In particular, if we define $c_n=a_{ni}^{-1}b_{ni}$, then $I=(c_n)_{n\in\omega}$ satisfies the hypotheses of Corollary 3.4. But now $p$ is f-generic, concentrated on $D$, and contains the formula $x\notin c_kD$ for every $k\in s$; this contradicts Corollary 3.4.
\end{proof}

\begin{corollary}
    Suppose there exists a global f-generic type, and let $M$ be a small model. Then the set $\{a^{-1}b:a,b\in G(\mathfrak{C}),a\equiv_M b\}$ is a group, and is hence equal to $G^{00}(\mathfrak{C})$.
\end{corollary}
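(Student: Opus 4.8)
Write $S_M:=\{a^{-1}b:a,b\in G(\mathfrak C),\ a\equiv_M b\}$ for the set in question. It is symmetric and contains the identity, so it generates a subgroup, which by the facts recalled in Section~2.2 equals $G^\infty_M(\mathfrak C)=G^\infty(\mathfrak C)$; since type-definable subgroups are invariant, $S_M\subseteq G^\infty(\mathfrak C)\subseteq G^{00}(\mathfrak C)$. The plan is to prove the reverse inclusion $G^{00}(\mathfrak C)\subseteq S_M$, which forces $S_M=G^\infty(\mathfrak C)=G^{00}(\mathfrak C)$ and in particular makes $S_M$ a group.

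By Lemma~3.2 fix a global $M$-invariant f-generic type $p$ and put $r:=p|_M$. For each $M$-definable $D$ with $p\vdash x\in D$, Lemma~3.5 says $\{x\in a^{-1}bD:a\equiv_M b\}=\{x\in cD:c\in S_M\}$ is f-generic; since the $M$-definable members of $p$ are closed under finite intersection, compactness yields that the partial type $\Sigma:=\{x\in cD:c\in S_M,\ D\in r\}$ is f-generic, hence consistent. Let $d$ realize $\Sigma$ in a sufficiently saturated extension of $\mathfrak C$. Unwinding membership and using that $S_M$ is symmetric gives $cd\models r$ for every $c\in S_M$; in particular $d\models r$, so $cd\equiv_M d$ for every $c\in S_M$.

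Now let $H:=\{c\in G:cd\equiv_M d\}$ --- type-definable over $Md$, with $1\in H$ and $S_M\subseteq H$ by the previous step --- and $H':=\{c\in G:cH\subseteq H\}$, also type-definable over $Md$, with $1\in H'\subseteq H$. An automorphism argument shows $S_M\subseteq H'$: given $c\in S_M$ and $c'\in H$, pick $\sigma\in\mathrm{Aut}(\mathfrak C/M)$ with $\sigma(d)=c'd$; then $cc'd=\sigma\bigl(\sigma^{-1}(c)\,d\bigr)$, and $\sigma^{-1}(c)\in S_M$ because $S_M$ is $\mathrm{Aut}(\mathfrak C/M)$-invariant, so $\sigma^{-1}(c)d\equiv_M d$ and hence $cc'd\equiv_M\sigma(d)=c'd\equiv_M d$, i.e.\ $cc'\in H$. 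Thus $H'$ is a subsemigroup of $G$ containing the symmetric set $S_M\ni1$, so $H'\supseteq\langle S_M\rangle=G^\infty(\mathfrak C)$; consequently $\{h\in G:hH'=H'\}$ is a type-definable subgroup of bounded index contained in $H'$, and so (by minimality) $G^{00}(\mathfrak C)\subseteq H'\subseteq H$. That is, $cd\equiv_M d$ for every $c\in G^{00}(\mathfrak C)$. Finally, for such a $c$ choose $\sigma\in\mathrm{Aut}(\mathfrak C/M)$ with $\sigma(d)=cd$; then $d^{-1}cd=d^{-1}\sigma(d)$ has the form $a^{-1}b$ with $a=d\equiv_M\sigma(d)=b$, so $d^{-1}cd\in S_M$. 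Hence $d^{-1}G^{00}d\subseteq S_M$, and since $G^{00}$ is normal this yields $G^{00}(\mathfrak C)\subseteq S_M$, as wanted.

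The step I expect to require the most care is the coordination of the choice of $d$ with the automorphisms used above: the parameter set of $\Sigma$ is essentially $M$ together with $S_M$, which need not be small, so $d$ lives in a proper elementary extension $\mathfrak C^*\succ\mathfrak C$, and one must make sure that the automorphisms of $\mathfrak C^*$ invoked in the last paragraph carry the relevant version of $S_M$ to itself and that the final inclusion descends back to $\mathfrak C$; the clean way is to take $d$ realizing a global f-generic type extending $\Sigma$ that is left-invariant under $G^\infty$, so that the needed automorphisms can be chosen to fix $\mathfrak C$ pointwise. A minor bookkeeping point: Lemma~3.5 translates $D$ on the left, which is precisely what makes the computation $d^{-1}cd=d^{-1}\sigma(d)$ land inside $\{a^{-1}b:a\equiv_M b\}$ (rather than $\{ab^{-1}:a\equiv_M b\}$), with normality of $G^{00}$ supplying the rest.
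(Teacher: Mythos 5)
Your route is genuinely different from the paper's: instead of verifying directly that $S_M=\{a^{-1}b:a,b\in G(\mathfrak{C}),\,a\equiv_M b\}$ is closed under products, you try to realize every $S_M$-translate of $r=p|_M$ at once by a single element $d$ and then push the connected component into $S_M$ by automorphism and stabilizer arguments. The first problem is the pair of type-definability claims: $H=\{c:cd\equiv_M d\}$ is indeed type-definable over $Md$, but $H'=\{c:cH\subseteq H\}$ is not obviously so --- writing $H=\bigcap_i H_i$ with $H_i$ definable, compactness only expresses each condition $cH\subseteq H_i$ as an ind-definable (infinite union of definable) condition, so $H'$ is an intersection of ind-definable sets, and the same objection applies to $\{h:hH'=H'\}$; hence the appeal to minimality of $G^{00}$ among type-definable bounded-index subgroups does not go through as stated. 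This particular gap is avoidable: you already have $G^\infty(\mathfrak{C})=\langle S_M\rangle\subseteq H'\subseteq H$, and running your final conjugation step with $G^\infty$ in place of $G^{00}$ gives $d^{-1}G^\infty d\subseteq S_M$, hence $G^\infty\subseteq S_M$ by normality of $G^\infty$; together with $S_M\subseteq G^\infty$ this already shows $S_M$ is a group, and the Section 2.2 facts then identify it with $G^{00}(\mathfrak{C})$. So the entire $G^{00}$-minimality detour should be deleted rather than repaired.

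The second and more serious problem is the one you flag at the end, and your proposed fix is circular. Since $\Sigma$ has parameters ranging over all of $S_M$, the realization $d$ lives in some $\mathfrak{C}^*\succ\mathfrak{C}$, and the automorphisms you invoke (e.g.\ $\sigma$ with $\sigma(d)=c'd$) are automorphisms of $\mathfrak{C}^*$ over $M$; these need not preserve $\mathfrak{C}$, so $\sigma^{-1}(c)$ only lands in $S^*_M=\{a^{-1}b:a,b\in G(\mathfrak{C}^*),\,a\equiv_M b\}$, and the whole argument silently migrates to $\mathfrak{C}^*$, where you would need $\mathfrak{C}^*$-versions of Lemmas 3.2 and 3.5 (in particular an f-generic type global over $\mathfrak{C}^*$) before descending back to $\mathfrak{C}$. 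Your suggested remedy --- take $d$ realizing a global f-generic extension of $\Sigma$ that is left-invariant under $G^\infty$, so that the automorphisms can fix $\mathfrak{C}$ pointwise --- presupposes translation-invariance of an f-generic type under the connected component, which is precisely the content of Corollary 3.7, itself proved from Corollary 3.6; no independent source for that invariance is available at this stage, so the fix begs the question. The paper sidesteps all of this: given two elements $a^{-1}b$ and $c^{-1}d$ of $S_M$, it applies Lemma 3.5 only to those two translates of $r$, so the relevant partial type $b^{-1}ar(x)\wedge c^{-1}dr(x)$ is over the small set $M\cup\{a,b,c,d\}$ and is realized by some $e\in\mathfrak{C}$; then $a^{-1}be\equiv_M d^{-1}ce$ and the identity $a^{-1}bc^{-1}d=(e^{-1}b^{-1}a)^{-1}(e^{-1}c^{-1}d)$ give closure under multiplication directly, with no automorphisms, no stabilizers, and no large parameter sets.
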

\begin{proof}
    Let $p(x)\in S_G(\mathfrak{C})$ be f-generic. It suffices to show that the set defined above is closed under multiplication, so fix any $a,b,c,d\in G(\mathfrak{C})$ with $a\equiv_M b$ and $c\equiv_M d$. Let $r(x)$ denote the restriction $p|_M(x)$. By Lemma 3.5 and compactness, the partial type $b^{-1}ar(x)\wedge c^{-1}dr(x)$ is f-generic, and hence in particular consistent. Let $e$ be any realization. Then $a^{-1}be$ and $d^{-1}ce$ each realize $r$, so their inverses $e^{-1}b^{-1}a$ and $e^{-1}c^{-1}d$ have the same type over $M$, and now $a^{-1}bc^{-1}d=(e^{-1}b^{-1}a)^{-1}(e^{-1}c^{-1}d)$.

    So indeed the set in the theorem statement is a group. By the facts in Section 2.2, it is the generating set (as an abstract group) of $G^\infty(\mathfrak{C})$, and hence equal to $G^\infty(\mathfrak{C})$; in particular it is contained in $G^{00}(\mathfrak{C})$. On the other hand, it is type-definable, hence a type-definable subgroup of bounded index, and so contains $G^{00}(\mathfrak{C})$. So it coincides with $G^{00}(\mathfrak{C})$.
\end{proof}
\begin{corollary}
Any global f-generic type is $G^{00}(\mathfrak{C})$-invariant.
\end{corollary}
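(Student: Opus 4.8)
The plan is to fix an arbitrary $g\in G^{00}(\mathfrak{C})$ and show that the union $p(x)\cup gp(x)$ is consistent. Since $p$ and $gp$ are both complete global types, consistency of their union immediately forces $gp=p$, which is precisely the assertion that $p$ is $G^{00}(\mathfrak{C})$-invariant. So the whole statement reduces to this single consistency claim, for each $g\in G^{00}(\mathfrak{C})$.

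To establish that consistency I would argue by compactness, using that $p$ restricts to a complete type over every small model. Every formula of $gp(x)$ is equivalent to $x\in gD$ for some $\mathfrak{C}$-definable $D\subseteq G$ with $p(x)\vdash x\in D$, so an arbitrary finite subset of $p(x)\cup gp(x)$ is implied by a single conjunction $x\in E\wedge gD$ with $p\vdash x\in E$ and $p\vdash x\in D$. Taking a small model $M$ containing all the parameters of $E$ and $D$ and putting $D_0:=E\cap D$, completeness of $p$ over $M$ gives $p(x)\vdash x\in D_0$, and $x\in E\wedge gD$ is implied by $x\in D_0\wedge gD_0$. Hence it suffices to prove: for every small model $M$ and every $M$-definable $D_0\subseteq G$ with $p(x)\vdash x\in D_0$, the set $D_0\wedge gD_0$ is non-empty.

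This last point follows from the results already in hand. Since $p$ is f-generic and concentrated on $D_0$, the set $D_0$ is f-generic. By Corollary 3.6 applied with the model $M$, we may write $g=a^{-1}b$ with $a\equiv_M b$; then Lemma 3.3 gives that $D_0\wedge a^{-1}bD_0=D_0\wedge gD_0$ is f-generic, and in particular non-empty, as required. I do not anticipate a serious obstacle: the only subtlety is in the compactness bookkeeping of the second step, namely that $g$ need not lie in $M$, which is exactly why it is cleaner to route the argument through consistency of $p(x)\cup gp(x)$ rather than trying to translate formulas of $p$ by $g$ directly; past that, everything is a matter of quoting Corollary 3.6 and Lemma 3.3.
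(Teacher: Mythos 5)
Your proof is correct and follows essentially the same route as the paper: both arguments use Corollary 3.6 to write $g=a^{-1}b$ with $a\equiv_M b$ over a suitable small model and then invoke Lemma 3.3 to see that the relevant intersection $D_0\wedge gD_0$ is f-generic, hence non-empty. The only difference is presentational — you argue directly that $p(x)\cup gp(x)$ is finitely satisfiable, whereas the paper argues by contradiction with $E=D\setminus gD$ — and this is an equivalent reformulation.
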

\begin{proof}
    Suppose otherwise that $p(x)\in S_G(\mathfrak{C})$ is f-generic but not $G^{00}(\mathfrak{C})$-invariant. Then there is a $\mathfrak{C}$-definable set $D\subseteq G$ and an element $g\in G^{00}(\mathfrak{C})$ with $p(x)$ concentrated on $E:=D\setminus gD$. Let $N$ be any small model over which $D$ and $g$ are both defined. By Corollary 3.6, there are $a,b\in G(\mathfrak{C})$ with $a\equiv_N b$ and $g=a^{-1}b$. Since $E$ is defined over $N$ and $p(x)\vdash x\in E$, by Lemma 3.3 we have that $E\wedge a^{-1}bE$, i.e. $E\wedge gE$, is f-generic. But this set is contained in $gD\setminus gD=\varnothing$, a contradiction.
\end{proof} Now from Corollary 3.7 and Fact 2.6 we obtain the main theorem; it is a completely standard consequence, but we give details just for completeness.
\begin{theorem}
    Suppose $G$ admits a global f-generic type. Then $G$ is definably amenable.
\end{theorem}
\begin{proof}
    Note that any reduct of an NIP theory is still NIP, and, by the `$G$-dividing' characterization of f-genericity in Fact 2.7, any reduct of an f-generic type to a language over which $G$ is still defined will remain f-generic. In particular, by Fact 2.6, if $G$ admits a global f-generic type then the reduct of $G$ to any countable sublanguage of $L$ over which $G$ remains defined is definably amenable.

    The main result follows now from standard arguments, using compactness in the space of Keisler measure on $G$. More precisely, the space $\mathfrak{M}_G(\mathfrak{C})$ of Keisler measures on $G(\mathfrak{C})$ is a closed subspace of the compact space of $[0,1]$-valued functions on the Boolean algebra of $L(\mathfrak{C})$-definable subsets of $G(\mathfrak{C})$, equipped with the topology of pointwise converge. So a basic open set of $\mathfrak{M}_G(\mathfrak{C})$ is of form $\{\mu:r_1<\mu(\phi(x,b))<r_2\}$, where $r_1,r_2\in [0,1]$ and $\phi(x,b)$ is an $L(\mathfrak{C})$-definable subset of $G$.

    Now the set $X\subseteq\mathfrak{M}_G(\mathfrak{C})$ of $G(\mathfrak{C})$-invariant measures is a closed subspace, given by the intersection of all sets of form $\{\mu:\mu(\phi(x,b)\triangle g\phi(x,b))=0\}$, where $\phi(x,b)$ is an $L(\mathfrak{C})$-definable subset of $G(\mathfrak{C})$ and $g\in G(\mathfrak{C})$. Definable amenability of $G$ is equivalent to non-emptyness of $X$, and so, since $\mathfrak{M}_G(\mathfrak{C})$ is compact, it suffices to show that for any finitely many formulas $\phi_i(x,b_i),i\leqslant n$ and finitely many group elements $g_i\in G(\mathfrak{C}),i\leqslant n$ we can find some $\mu\in\mathfrak{M}_x(\mathfrak{C})$ assigning measure $0$ to all the formulas $\phi_i(x,b_i)\triangle g_i\phi_i(x,b_i)$. By the first paragraph, letting $L_0$ be a countable sublanguage of $L$ over which $G$ and the $\phi_i$ are still defined, we can find a $G(\mathfrak{C})$-invariant Keisler measure $\mu_0$ on the Boolean algebra of $L_0(\mathfrak{C})$-definable subsets of $G(\mathfrak{C})$; in particular $\mu_0$ assigns measure $0$ to each formula $\phi_i(x,b_i)\triangle g_i\phi_i(x,b_i)$. Now on general grounds we can extend $\mu_0$ to a Keisler measure $\mu$ on the Boolean algebra of $L(\mathfrak{C})$-definable subsets of $G(\mathfrak{C})$, giving the desired result; see \cite{los}. 
\end{proof}

By results from \citep{hrushovski_pillay} and \citep{chernikov_simon}, we obtain the following:

\begin{corollary}
    The following are equivalent for a group $G$ definable in an NIP theory: \begin{enumerate}
        \item $G$ is definably amenable.
        \item $G$ admits a global type invariant under left-translation by elements of $G^{00}$.
        \item $G$ admits a global f-generic type.
        \item The non-f-generic, equivalently $G$-dividing, definable subsets of $G$ form an ideal.
    \end{enumerate}
\end{corollary}
\begin{proof}
    That 1 implies 2 is from \citep{hrushovski_pillay}, and that 2 implies 3 is from \citep{chernikov_simon}. The equivalence of 3 and 4 is Fact 2.9, and that 3 implies 1 is Theorem 3.8.
\end{proof}

\section{An application to dp-minimal groups}
In this section, we do not assume that $T$ is NIP. Recall that a theory is `inp-minimal' if there do not exist formulas $\phi(x,y)$ and $\psi(x,z)$, where $x$ is a singleton variable of the home sort, and indiscernible sequences $(b_i)_{i\in\omega}$ and $(c_i)_{i\in\omega}$, such that $\phi(x,b_i)\wedge\psi(x,c_j)$ is consistent for each $i,j\in\omega$ but $\{\phi(x,b_i):i\in\omega\}$ and $\{\psi(x,c_i):i\in\omega\}$ are each inconsistent.

A theory is `dp-minimal' if it is both inp-minimal and NIP. Dp-minimality was first studied in \citep{shelah_forking} and \citep{shelah_strong}, and then isolated as a notion in \citep{onshuus_usvyatsov}; see also for example \citep{dolich_goodrick_lippel} for an introduction to the notion.

In Problem 5.9 of \citep{chernikov_pillay_simon} and Problem 3.13 of \citep{kaplan_levi_simon}, it was asked whether every dp-minimal group is definably amenable. We point out here that a positive answer follows quickly from Theorem 3.8. First we need Lemma 4.1, which is a slightly more general version of Fact 2.8 and is proved in the same way.

\begin{lemma}
    Suppose that $M\prec\mathfrak{C}$ is a small model such that an $L(\mathfrak{C})$-formula forks over $M$ if and only if it divides over $M$. Suppose also that $D,E\subseteq G$ are $M$-definable and not f-generic, and that $D\vee E$ is f-generic. Then there are $d\in D$ and $e\in E$ such that $e^{-1}D$ and $d^{-1}E$ each fork over $M$.
\end{lemma}
\begin{proof}
    By symmetry, it suffices to find $e\in E$ such that $e^{-1}D$ forks over $M$. Since $D$ is not f-generic, some left translate of $D$ forks and hence divides over $M$, so there is some $M$-indiscernible sequence $(g_i)_{i\in\omega}$ such that $\bigwedge_{i\in\omega}g_iD=\varnothing$; let $k$ be such that the translates $g_iD$ are $k$-inconsistent. Since $D\vee E$ is f-generic and defined over $M$, we have $\bigwedge_{i\in\omega}g_i(D\vee E)\neq\varnothing$, which forces $\bigwedge_{i\in\omega}g_iE\neq\varnothing$ by pigeonhole and indiscernibility; let $a\in\bigwedge_{i\in\omega}g_iE$. Now letting $e_i:=g_i^{-1}a$, then $e_i\in E$ for every $i\in\omega$. On the other hand, the translates $e_i^{-1}D$ are $k$-inconsistent, since for any $s\subset\omega$ of size $k$ we have $\bigwedge_{i\in s}e_i^{-1}D=a^{-1}\bigwedge_{i\in s}g_iD=\varnothing$. By Ramsey and compactness, we may assume that $(e_i)_{i\in\omega}$ is $M$-indiscernible, and then $e_0^{-1}D$ divides over $M$, as needed.
\end{proof}
\begin{lemma}
    If $G$ is inp-minimal, then the non-f-generic definable subsets of $G$ form an ideal.
\end{lemma}
\begin{proof}
    Suppose otherwise. Then there are some $\mathfrak{C}$-definable $D,E\subseteq G$ that are not f-generic but such that $D\vee E$ is f-generic. Let $M$ be a small model over which $D,E$ are both defined; since inp-minimal theories are NTP$_2$, by the results of \cite{chernikov_kaplan} forking and dividing coincide over $M$, and so by Lemma 4.1 there are some $d\in D$ and $e\in E$ such that $e^{-1}D$ and $d^{-1}E$ each fork over $M$. Let $(d_i)_{i\in\omega}$ and $(e_i)_{i\in\omega}$ be $M$-indiscernible with $d_0=d$ and $e_0=e$ and such that each of the families of formulas $\{x\in e_i^{-1}D:i\in\omega\}$ and $\{x\in d_i^{-1}E:i\in\omega\}$ is inconsistent; then also $\{x\in E^{-1}d_i:i\in\omega\}$ is inconsistent. On the other hand, for any $i,j\in\omega$, we have $e_i\in E$ and $d_j\in D$, so that $e_i^{-1}D\wedge E^{-1}d_j$ contains $e_i^{-1}d_j$ and is hence consistent. This contradicts inp-minimality.
\end{proof}

Now by Theorem 3.8 and Lemma 4.2 we obtain the desired result:
\begin{theorem}
    If $G$ is dp-minimal then it is definably amenable.
\end{theorem}
\begin{proof}
    Since $G$ is inp-minimal, by Lemma 4.2 $G$ admits global f-generic types. Since $G$ is NIP, the claim follows from Theorem 3.8.
\end{proof}

\section{The type-definable case}
In this section we will deal with the case of a type-definable group in an NIP theory. So, throughout the section, assume that $T$ is NIP and that $(G,\cdot)$ is a type-definable group. This means that $G(x)$ is a partial type over a small set of parameters, and that $\cdot$ is a definable binary operation defined on some definable superset of $G$ such that $(G(\mathfrak{C}),\cdot)$ is a group. Letting $X\supseteq G$ be a definable set on which $\cdot$ is defined, defining a new map $\ast:X\to X$ by $x\ast y=x\cdot y$ if $x\cdot y\in X$ and $x\ast y=x$ otherwise, and replacing $\mathfrak{C}^x$ by $X$ and $\cdot$ by $\ast$, we may assume without loss that $\cdot$ is defined on all of $\mathfrak{C}^x$.

Most of the general theory of NIP groups still goes through in the type-definable case. For example, the connected components $G^{00}$ and $G^{\infty}$ exist, and are defined in exactly the same way as in the definable case. Definable amenability can also be defined for the type-definable case: $G(x)$ is definably amenable if there is a Keisler measure $\mu_x\in\mathfrak{M}_x(\mathfrak{C})$ such that $\mu(\phi(x,b))=1$ for every formula $\phi(x,b)$ with $G(x)\vdash\phi(x,b)$ and such that $\mu(\phi(gx,b))=\mu(\phi(x,b))$ for every $g\in G(\mathfrak{C})$ and every $L(\mathfrak{C})$-formula $\phi(x,b)$.

Moreover, the analogue of Fact 2.5 still applies, by the same argument discussed in Section 2.2:\begin{fact}
    Suppose $T$ is countable and NIP and that $G$ is type-definable over a countable parameter set. If there is a global $G^{00}$-invariant type concentrated on $G$, then $G$ is definably amenable.
\end{fact}

Now, given a formula $\psi(x,c)$, let us say that $\psi(x,c)$ is `f-generic for $G$' if, for every $g\in G(\mathfrak{C})$, and for some (every) small model $M\prec\mathfrak{C}$ over which $G$ and $\psi(x,c)$ are defined, the partial type $G(x)\wedge\psi(gx,c)$ does not fork over $M$. By Fact 2.2, $\psi(x,c)$ is f-generic for $G$ if and only if for some (every) $M\prec\mathfrak{C}$ over which $G$ and $\psi(x,c)$ are defined, and for every $M$-indiscernible sequence $(g_i)_{i\in\omega}$ of elements of $G(\mathfrak{C})$, the partial type $G(x)\wedge\bigwedge_{i\in\omega}\psi(g_ix,c)$ is inconsistent.

Let us further say that $\psi(x,c)$ `$G$-divides' if there is a definable set $\phi(x,b)$ in the partial type defining $G(x)$ and a 
sequence of elements $(g_i:i\in\omega)$ of $G(\mathfrak{C})$ such that, for some $k\in\omega$, the formula $\phi(x,b)\wedge\bigwedge_{i\in s}\psi(g_ix,c)$ is inconsistent for every $s\subset\omega$ of size $k$. Now by a Ramsey+compactness argument, non-$G$-dividing is equivalent to f-genericity for $G$. 

With these new definitions in place, all of the proofs from Section 2.3.1, and all of the proofs from Section 3 up until and including Corollary 3.7, go through essentially without modification. The only change is that, instead of considering definable subsets $D\subseteq G$, we would consider `relatively' definable subsets of $G$, namely partial types of form $\phi(x,b)\wedge G(x)$ for some $L(\mathfrak{C})$-formula $\phi(x,b)$. Summarizing, we obtain the following:


\begin{proposition}
    A global type concentrated on $G$ that does not $G$-divide is $G^{00}$-invariant.
\end{proposition}

Now we would like to argue as in Theorem 3.8 to deduce the main result from Fact 5.1 and Proposition 5.2. However, there is a bit of subtlety. If $G$ is type-definable by an intersection of countably many formulas, then we may indeed reduce to the case of a countable reduct and a countable parameter set, and we will be done. But $G$ may not be type-definable by an intersection of countably many formulas, and in that case the argument is more involved. The main point is the following standard but technical observation:

\begin{lemma}
    Suppose $(G,\cdot)$ is type-definable without parameters, and suppose there exists a global type concentrated on $G$ that does not $G$-divide. Let $\eta(x)$ be a formula without parameters such that $G(x)\vdash \eta(x)$, and let $L_0$ be a countable sublanguage of $L$ such that $\eta(x)$ and $\cdot$ are both defined in $L_0$. Then there is a countable sublanguage $L_\eta\subseteq L$, containing $L_0$, and a group $G_\eta$, type-definable without parameters by formulas from $L_\eta$, such that (i) $G(x)\vdash G_\eta(x)$, (ii) $G_\eta(x)\vdash \eta(x)$, and (iii) there is a global $L_\eta$-type concentrated on $G_\eta$ that does not $G_\eta$-divide.
\end{lemma}

\begin{proof}
    We will do an `interleaving' argument, inductively constructing a sequence of countable sublanguage $L_0\subseteq L_1\subseteq\dots$ of $L$. For each $t\in\omega$, we will define $Q_t(x)$ to be the partial type consisting of all $L_t$-formulas $\phi(x)$ without parameters such that $G(x)\vdash \phi(x)$. So each $Q_t$ will be countably defined without parameters, and for each $t\in\omega$ we will have $G(x)\vdash Q_t(x)$ and $Q_{t+1}(x)\vdash Q_t(x)$. To conclude we will take $G_\eta(x)=\bigwedge_{t\in\omega}Q_t(x)$ and $L_\eta=\bigcup_{t\in\omega}L_t$.
    
    For the base step, let $L_0$ be the language given in the theorem statement. Note in particular that $Q_0(x)\vdash\eta(x)$. Now suppose that we have constructed $L_t$ and hence $Q_t$. We split into two cases.
    
    First suppose $t$ is even. Let $\phi(x)$ be a formula in the partial type defining $Q_t$, i.e. an $L_t$-formula without parameters such that $G(x)\vdash \phi(x)$. By compactness, we may find an $L$-formula $\alpha_\phi(x)$ in the partial type defining $G(x)$ such that (i) the product of any two elements of $\alpha_\phi(x)$ satisfies $\phi(x)$, and (ii) every element of $\alpha_\phi(x)$ has a two-sided inverse in $\phi(x)$. Let $L_{t+1}$ be any countable sublanguage of $L$ containing $L_t$ and over which all of the $\alpha_\phi$ are defined. Then $ab\models Q_t(x)$ for all $a,b\models Q_{t+1}(x)$ and every element of $Q_{t+1}(x)$ has a two-sided inverse in $Q_t(x)$.

    Now suppose $t$ is odd. Let $\phi(x)$ be a formula in the partial type defining $Q_t$, let $\psi_1(x,y_1),\dots,\psi_n(x,y_n)$ be a finite collection of formulas of $L_t$, without parameters, and let $k\in\omega$ be a natural number. For each $i\in [n]$, let $\sigma_i(y_i)$ be the partial type $$\exists u_j:j\in\omega\left[\bigwedge_{j\in\omega}G(u_j)\wedge\bigwedge_{s\subset\omega,|s|=k}\neg\exists x\left(\phi(x)\wedge\bigwedge_{j\in s}\psi_i(u_jx,y_i)\right)\right].$$ Now let $\pi(y_1,\dots,y_n)$ be the partial type which contains $\sigma_1(y_1)\wedge\dots\wedge\sigma_n(y_n)$ and $\forall x\left[\phi(x)\to\bigvee_{i\in[n] }\psi_i(x,y_i)\right]$. By the assumption that there is a global type concentrated on $G$ that does not $G$-divide, $\pi(y_1,\dots,y_n)$ is inconsistent; indeed if $b_i\models\sigma_i(y_i)$, then $\psi_i(x,b_i)$ $G$-divides, and if additionally $(b_1,\dots,b_n)$ realizes $\forall x\left[\phi(x)\to\bigvee_{i\in[n] }\psi_i(x,y_i)\right]$ then every global type concentrated on $G$ must contain some $\psi_i(x,b_i)$. So, by compactness, there is some $L$-formula $\beta_{\phi,\psi_1,\dots,\psi_n,k}(x)$ in the partial type defining $G(x)$ such that, replacing the $G(u_j)$ in $\sigma_i(y_i)$ by $\beta_{\phi,\psi_1,\dots,\psi_n,k}(u_j)$, one still gets inconsistency in $\pi(y_1,\dots,y_n)$. Let $L_{t+1}$ be any countable sublanguage of $L$ containing $L_t$ and over which all of the $\beta_{\phi,\psi_1,\dots,\psi_n,k}$ are defined.
    
    As described in the first paragraph, let $L_\eta=\bigcup_{t\in\omega}L_t$ and $G_\eta(x)=\bigwedge_{t\in\omega}Q_t(x)$. Since $Q_0(x)\vdash\eta(x)$, $G_\eta(x)\vdash\eta(x)$. By definition of the $Q_t$, $G(x)\vdash G_\eta(x)$ and $G_\eta(x)$ is type-defined by $L_\eta$-formulas without parameters. By the even stages of the construction, $(G_\eta,\cdot)$ is a group. So we need to show that there is a global $L_\eta$-type concentrated on $G_\eta$ that does not $G_\eta$-divide.

    It is enough to show that, for any $L_\eta$-formula $\phi_0(x)$ in the partial type defining $G_\eta(x)$, and any $L_\eta(\mathfrak{C})$-formulas $\psi_1(x,b_1),\dots,\psi_n(x,b_n)$ that all $G_\eta$-divide, the formula $\phi_0(x)\wedge\bigwedge_{i\in[n]}\neg\psi_i(x,b_i)$ is consistent. Suppose otherwise. Then $\phi_0(x)\vdash\bigvee_{i\in[n]}\psi_i(x,b_i)$. Also, since $\psi_i(x,b_i)$ $G_\eta$-divides, there is (by definition) a formula $\phi_i(x)$ in the partial type defining $G_\eta$ and a sequence $(g_{ij}:j\in\omega)$ of elements of $G_\eta(\mathfrak{C})$ such that, for some $k_i\in\omega$, the formula $\phi_i(x)\wedge\bigwedge_{j\in s}\psi_i(g_{ij}x,b_i)$ is inconsistent for every $s\subset\omega$ of size $k_i$. Now pick $t\in\omega$ an odd number such that $\phi_0(x)$ and the $\phi_i(x)$ and the $\psi_i(x,y_i)$ are all $L_t$-formulas. Letting $\phi(x)=\phi_0(x)\wedge\bigwedge_{i\in[n]}\phi_i(x)$ and $k=\max_{i\in[n]}k_i$, we now get a contradiction to the fact that $Q_{t+1}(x)\vdash\beta_{\phi,\psi_1,\dots,\psi_n,k}(x)$ and hence that each $g_{ij}$ realizes $\beta_{\phi,\psi_1,\dots,\psi_n,k}(x)$. 
    \end{proof}
So altogether we get the following.
\begin{theorem}
    If there is a global type concentrated on $G$ that does not $G$-divide, then $G$ is definably amenable.
\end{theorem}
\begin{proof}
    By adding constant symbols for the parameters defining $G$, we may assume that $G(x)$ is type-definable by $L$-formulas without parameters. We will argue as in Theorem 3.8; in this case we are trying to construct an element of $\mathfrak{M}_x(\mathfrak{C})$ in the intersection of (i) all sets of form $\{\mu:\mu(\theta(x,c)\triangle \theta(gx,c))=0\}$ for $\theta(x,c)$ an $L(\mathfrak{C})$-formula and $g\in G(\mathfrak{C})$, and (ii) all sets of form $\{\mu:\mu(\eta(x))=1\}$ for $\eta(x)$ an $L$-formula without parameters such that $G(x)\vdash\eta(x)$. By compactness in the space $\mathfrak{M}_x(\mathfrak{C})$, we need only show that finitely many conditions of this kind can be satisfied. So, given finitely many conditions, let $L_0$ be a countable sublanguage of $L$ over which all of the formulas in the conditions are defined. Let $\eta(x)$ be the conjunction of all the formulas appearing in the finitely many conditions of type (ii).

    Let $G_\eta$ and $L_\eta$ be given by Lemma 5.5 for $L_0$ and $\eta(x)$. Then there is a global $L_\eta$-type $p(x)$ concentrated on $G_\eta(x)$ that does not $G_\eta$-divide. By the definition of $G_\eta$-dividing, $p(x)$ still does not $G_\eta$-divide in the reduct to $L_\eta$. So, by Proposition 5.2 applied in the reduct, $p(x)$ is invariant under $G^{00}_\eta$ \textit{as computed in the reduct}. So, by Fact 5.1 applied in the reduct, $G_\eta$ is definably amenable in the reduct. Any Keisler measure witnessing this will satisfy all of the desired closed conditions, and, as in the proof of Theorem 5.8, we can extend the Keisler measure from the reduct to a Keisler measure for the original language, giving the desired result.
\end{proof}

So indeed, in an NIP theory, a type-definable group with a global f-generic type is definably amenable. As far as we can tell, Theorem 5.4 also gives the first proof that a type-definable group with a global \textit{strongly} f-generic type is definably amenable; we refer back to Section 2.3.2 for the thorough discussion of this. So Theorem 5.4 has the pleasant consequence of giving a corrected proof for Proposition 5.7 of \cite{hrushovski_pillay}.

\newpage


\end{document}